\documentclass[11pt]{article}

\usepackage{enumerate}
\usepackage{amsthm,amsmath,amssymb}
\usepackage{graphicx}
\usepackage[colorlinks=true,citecolor=black,linkcolor=black,urlcolor=blue]{hyperref}
\usepackage[english]{babel}
\usepackage{amsfonts}
\usepackage{epsfig, subfigure}
\usepackage{amscd,latexsym}
\usepackage{url}
\usepackage{color}
\usepackage{authblk}
\usepackage{subfigure}

\textwidth=15.5cm
\textheight=22.5cm
\voffset=-1.7cm \hoffset=-0.5cm

\newcommand{\modq}{\text{ mod }4}

\newcommand\blfootnote[1]{%
	\begingroup
	\renewcommand\thefootnote{}\footnote{#1}%
	\addtocounter{footnote}{-1}%
	\endgroup
}

\theoremstyle{plain}
\newtheorem{theorem}{Theorem}
\newtheorem{lemma}[theorem]{Lemma}
\newtheorem{corollary}[theorem]{Corollary}
\newtheorem{proposition}[theorem]{Proposition}

{\itshape}{\rmfamily}

\newtheorem{conjecture}[theorem]{Conjecture}

\theoremstyle{plain}

\date{\today}

\title{The Equidistant Dimension of  Graphs}

\author{A. Gonz\'{a}lez\thanks{Department of Didactics of Mathematics, Universidad de Sevilla.
E-mail address:
gonzalezh@us.es. \\Partially supported by the "VI Plan Propio de Investigación y Transferencia" of the Universidad de Sevilla (Spain) and by the Research Group in Mathematics Education FQM-226 of the Junta de Andalucía (Spain).},
C. Hernando\thanks{Departament de Matem\`{a}tiques, Universitat Polit\`{e}cnica de Catalunya.\\
E-mail addresses:
 ${\rm \{}$carmen.hernando,merce.mora${\rm \}}$@upc.edu.
 Partially supported by projects
 Gen.Cat. DGR2017SGR1336 and PID2019-104129GB-I00/AEI/10.13039/501100011033  of the Spanish Ministry of Science and Innovation.},
M. Mora$^{\dag}$\thanks{Also supported by European project H2020-734922-CONNECT.}
}

\begin{document}
\maketitle

	\blfootnote{\begin{minipage}[l]{0.3\textwidth}
		\includegraphics[trim=10cm 6cm 10cm 5cm,clip,scale=0.15]{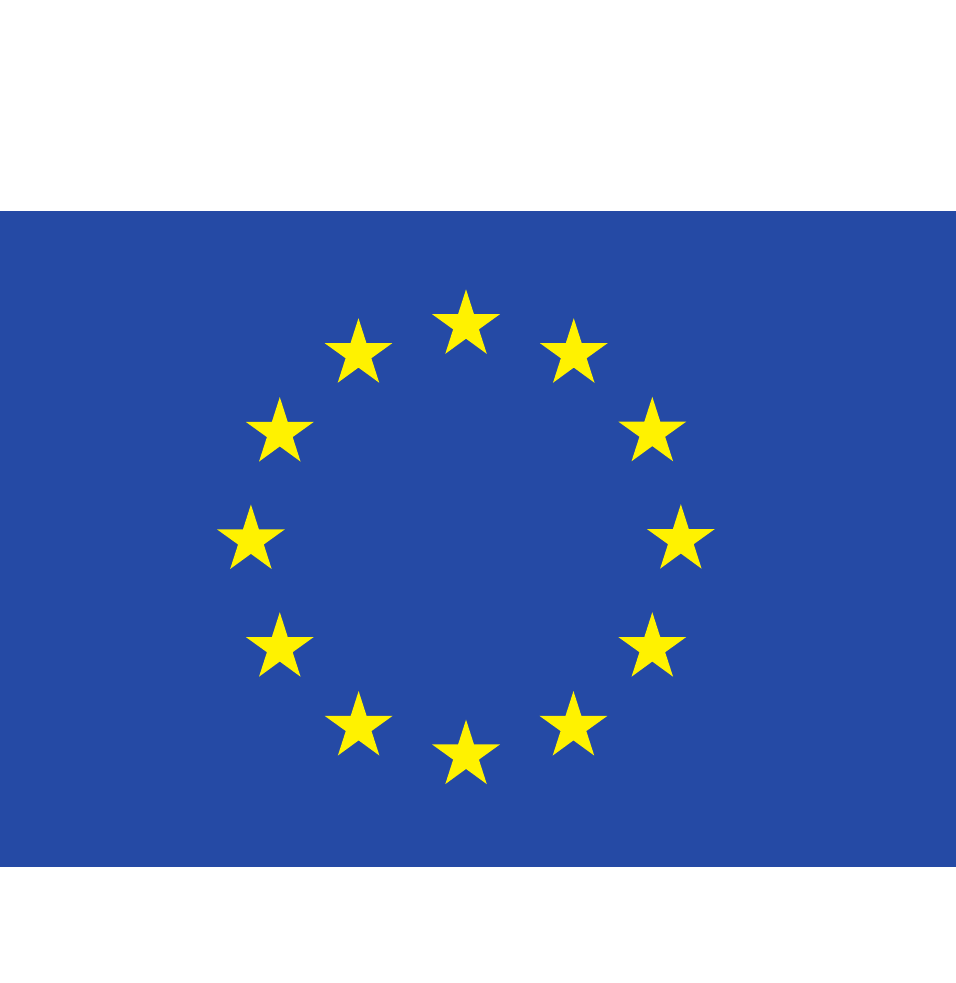}
	\end{minipage}  \hspace{-2cm} \begin{minipage}[l][1cm]{0.7\textwidth}
		This project has received funding from the European Union's Horizon 2020 research and innovation programme under the Marie Sk\l{}odowska-Curie grant agreement No 734922.
\end{minipage}}

\begin{abstract}
A subset $S$ of vertices of a connected graph $G$ is a {\em distance-equalizer set} if for every two distinct
vertices $x, y \in V (G) \setminus S$ there is a vertex $w \in S$ such that the distances from $x$ and $y$ to
$w$ are the same. The {\em equidistant dimension} of $G$ is the minimum cardinality of a distance-equalizer set of
G. This paper is devoted to introduce this parameter and explore its properties and
 applications to other mathematical problems, not necessarily in the context of graph theory. 
Concretely, we first establish some bounds concerning the order, the maximum degree, the clique number, and the independence number,
and characterize all graphs attaining some extremal values. We then study the equidistant dimension of several families of graphs
(complete and complete multipartite graphs, bistars, paths, cycles, and Johnson graphs), proving that, in the case of paths and cycles,
this parameter is related with 3-AP-free sets. Subsequently,
 we show the usefulness of distance-equalizer sets for constructing doubly resolving sets.
\end{abstract}

{\bf Keywords:}
distance-equalizer set, equidistant dimension, resolving set, doubly resolving set, metric dimension.

\section{Introduction}\label{intro}
The notion of resolving set, also known as locating set,
	was introduced by Slater \cite{slater} and, independently, by Harary and Melter \cite{harary}. This concept arises in diverse areas, including location problems in  networks  of different nature (see  \cite{our}).
	For example, in order to locate a failure in a computer network modeled as a graph, we are interested in a subset
 of vertices $S$ such that every vertex of the underlying graph might be uniquely determined by its vector of distances to
 the vertices of $S$. Such a set is called a {\em resolving set} of the graph, and the {\em metric dimension} of that graph is the minimum cardinality of a resolving set.

Resolving sets and several related sets, such as identifying codes, locating-dominating sets or watching systems, have been widely studied during the last decades (see \cite{b,ourinf,CGHHMMT21,lobstein}), as well as doubly resolving sets, a type of subset of vertices more restrictive than resolving sets  with multiple applications in different areas~\cite{our,kratica0,cw,ghm,j,doubly, kratica2}. 
However, many recent papers~\cite{c,cs,cs2,anonimo,yero,z,z2} have turned their attention precisely in the opposite direction to resolvability, thus trying to study anonymization problems in networks instead of location aspects.
 For instance, the need to ensure privacy and anonymity in social networks makes necessary to develop graph tools
such as the concepts of  antiresolving set and  metric antidimension,  introduced by
Trujillo-Rasua and Yero~\cite{yero}. Indeed, a subset of vertices $A$ is a {\it 2-antiresolving set} if, for every vertex $v\notin A$, there exists another different vertex $w\notin A$ such that $v$ and $w$ have the same vector of distances to the vertices of $A$; the {\it 2-metric antidimension} of a graph is the minimum cardinality among all its 2-antiresolving sets. With the same spirit, this paper introduces new graph concepts that can also be applied to anonymization problems in networks: \emph{distance-equalizer set} and \emph{equidistant dimension}. Furthermore, we shall see that these concepts have concrete applications in mathematical problems, such as obtaining new bounds on the size of doubly resolving sets of graphs, as well as a new formulation in terms of graphs of a classical problem of number theory.

The paper is organized as follows.
In Section~\ref{basics}, we define distance-equalizer sets and the equidistant dimension, and show bounds in terms of other graph parameters: order, diameter, maximum degree, independence number, and clique number.
Section~\ref{extremal} is devoted to characterize all graphs attaining some extremal values of the equidistant dimension.
In Section~\ref{families} we study this parameter for some families of graphs: complete and complete multipartite graphs, bistars,
 paths, cycles, and Johnson graphs. For the particular cases of paths and cycles, we show that this parameter is related with 3-AP-free sets.
  In Section~\ref{doubly}, we obtain bounds for general graphs and trees on the minimum cardinality of doubly resolving sets in terms
  of the equidistant dimension. 
Finally, we present some conclusions and open problems in Section~\ref{op}.

All graphs considered in this paper are connected, undirected, simple and finite. The
 vertex set and the edge set of a graph $G$ are denoted by  $V(G)$ and $E(G)$, respectively;
 its \emph{order} is $ |V(G)|$. For any vertex $v\in V(G)$, its \emph{open neighborhood} is the set $N(v) = \{w \in V(G)  : vw \in E(G)\}$ and its \emph{closed neighborhood} is $ N[v] = N(v)\cup\{v\}$;
 its \emph{degree}, denoted by
$deg(v)$, is defined as the cardinality of $N(v)$. If $deg(v) = 1$ then we
 say that $v$ is a \emph{leaf}, in which case the only vertex adjacent to $v$ is called its \emph{support vertex}; when $\deg (v)=|V(G)|-1$, we say that $v$ is \emph{universal}.
The \emph{maximum degree} of $G$ is $\Delta(G) = \textrm{max} \ \{\textrm{deg}(v) : v \in V(G)\}$ and
its \emph{minimum degree} is
$\delta(G) = \textrm{min} \ \{\textrm{deg}(v) : v \in V(G)\}$.
The \emph{distance} between two vertices $v,w \in V(G)$ is denoted by $d(v,w)$, and
the {\em diameter} of $G$ is $D(G) =\textrm{ max} \{d(v,w) : v,w \in V(G)\}$.
A \emph{clique} is a subset of pairwise adjacent vertices and the \emph{clique number} of $G$, denoted by $\omega (G)$,  is the maximum cardinality of a clique of $G$; an \emph{independent set}  is a subset of pairwise non-adjacent vertices and the {\em independence number} of $G$, denoted by $\omega (G)$, is the maximum cardinality of an independent set of $G$.
For undefined terms we refer the reader to \cite{west}.

For every integer $n\ge 1$, let $[n]=\{1,2,\dots ,n\}$. We denote by $P_n$ the \emph{path} of order $n$ with vertex set $[n]$  and edges $ij$ with j=i+1 and $i\in[n-1]$, and by $C_n$ the \emph{cycle} of order $n$, $n\ge 3$, with vertex set  $[n]$ and the same edge set as $P_n$ together with the edge $1n$. Also, for every $r,s\geq 3$, we denote by $K_{1,r}$ the {\em star} on $r+1$ vertices with vertex set $\{v\} \cup [r]$,  vertex $v$ being called the {\em center} of the star, and edge set $\{vi: i\in [r]\}$;
 a {\it bistar}, denoted by $K_2(r,s)$, is a graph obtained by joining the centers of two stars $K_{1,r-1}$ and $K_{1,s-1}$.

\section{Distance-equalizer sets and equidistant dimension}\label{basics}

Let $x,y,w$ be vertices of a graph $G$.
We say that $w$ is {\it equidistant  } from $x$ and $y$ if $d(x,w)=d(y,w)$.
A subset $S$ of vertices is called a {\it distance-equalizer set} for $G$ if for every two distinct vertices $x,y\in V(G)\setminus S$ there exists a vertex $w \in S$ equidistant from $x$ and $y$; the
{\it equidistant dimension} of $G$, denoted by $eqdim(G)$, is the minimum cardinality of a distance-equalizer set of $G$.
For example, if $v$ is a universal vertex of a graph $G$, then $S=\{v\}$ is a minimum distance-equalizer set of $G$, and so $eqdim (G)=1$.
Also, a distance-equalizer set of $P_8$ is shown in Figure \ref{path}, and it can be easily checked that $P_8$ has no distance-equalizer set of size at most $4$, so $eqdim(P_8)=5$.
\begin{figure}[!t]
	\begin{center}
		\includegraphics[width=0.4\textwidth]{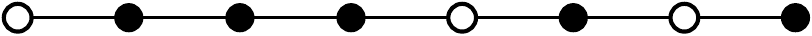}
		\caption{Black vertices form a distance-equalizer set of minimum size for $P_8$.}
		\label{path}
	\end{center}
\end{figure}

The following results are immediate but make it easier to prove subsequent results.

\begin{lemma}\label{hoja-soporte}
	Let $G$ be a graph. 	
		If $S$ is a distance-equalizer set of $G$ and $v$ is a support vertex of $G$, then $S$ contains $v$ or all leaves adjacent to $v$.
		Consequently, $$eqdim(G)\geq \mid \{v\in V(G) \, : \, v \,\, \textrm{is a support vertex} \}\mid.$$
	 \end{lemma}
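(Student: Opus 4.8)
The plan is to first isolate the key metric fact about a leaf and its support vertex, then use it to settle the membership statement, and finally derive the lower bound by a counting/injection argument.

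First I would record the elementary observation that no vertex can be equidistant from a leaf $u$ and its support vertex $v$. Indeed, since $u$ has degree $1$ with unique neighbor $v$, every shortest path from $u$ to any vertex $w \neq u$ starts with the edge $uv$, so $d(u,w) = d(v,w) + 1$; and for $w = u$ we have $d(u,u) = 0 < 1 = d(v,u)$. In every case $d(u,w) \neq d(v,w)$, so there is literally no $w \in V(G)$ equidistant from $u$ and $v$. Given this, the first assertion follows by contradiction: suppose $S$ is a distance-equalizer set, $v$ is a support vertex with $v \notin S$, and some leaf $u$ adjacent to $v$ also satisfies $u \notin S$. Then $u$ and $v$ are two distinct vertices of $V(G) \setminus S$, so the definition of a distance-equalizer set forces some $w \in S$ equidistant from them; but the previous observation shows no such $w$ exists anywhere in $G$, a contradiction. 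Hence if $v \notin S$ then every leaf adjacent to $v$ lies in $S$, which is exactly the claimed dichotomy that $S$ contains $v$ or all the leaves adjacent to $v$.

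For the lower bound I would take $S$ to be a minimum distance-equalizer set and construct an injection $f$ from the set of support vertices into $S$. For each support vertex $v$, set $f(v) = v$ if $v \in S$; otherwise, by the first part all leaves adjacent to $v$ belong to $S$, and (since $v$ is a support vertex it has at least one) I let $f(v)$ be one such leaf in $S$. The point to verify is injectivity. If two support vertices $v_1 \neq v_2$ both lie in $S$, their images are themselves and hence distinct. If both lie outside $S$, then $f(v_1)$ and $f(v_2)$ are leaves whose unique neighbors are $v_1$ and $v_2$ respectively; since a leaf has exactly one neighbor, equal images would force $v_1 = v_2$.

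The main point needing care is the remaining mixed case, where $v_1 \in S$ but $v_2 \notin S$ and one might fear that $f(v_1) = v_1$ coincides with the leaf $f(v_2)$: this would make the support vertex $v_1$ itself a leaf adjacent to $v_2$, forcing $v_2$ to be a leaf as well, i.e. $G$ is a single edge $K_2$. Apart from this degenerate graph the coincidence cannot occur, the map $f$ is injective, and therefore $eqdim(G) = |S| \geq |\{v \in V(G) : v \text{ is a support vertex}\}|$.
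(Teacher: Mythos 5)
Your proof is correct and follows the same route as the paper: the key fact that $d(u,w)=d(v,w)+1$ for every $w\neq u$ (and $d(u,u)=0\neq 1=d(v,u)$) shows no vertex is equidistant from a leaf $u$ and its support vertex $v$, and the membership dichotomy follows at once. Your explicit injection for the ``consequently'' part is more detailed than the paper, which simply asserts the bound, and your care in the mixed case pays off: the coincidence you isolate really does occur for $G=K_2$, where both vertices are support vertices yet $eqdim(K_2)=1$, so the stated inequality actually fails for that one degenerate graph --- a small oversight in the paper's statement that your argument correctly excludes.
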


\begin{proof}
	No vertex is equidistant from a leaf and its support vertex, since every path from a leaf to any other vertex goes through its support vertex.  Hence, if $v$ is not in $S$, then all leaves hanging from $v$ must be in $S$.	
\end{proof}

Recall that a graph $G$ is  {\em bipartite} whenever $V(G)$ can be partitioned into two independent sets, say $A,B$, which
are called its {\em partite sets}.

\begin{proposition} \label{propbipartitos}
	Let $G$ be a bipartite graph with partite sets $A$ and $B$. If $S$ is a distance-equalizer set of $G$, then $A\subseteq S$ or  $B\subseteq S$.
		Consequently, $eqdim(G)\geq min\{\vert A \vert, \vert B \vert \}.$
\end{proposition}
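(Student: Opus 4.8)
The plan is to exploit the parity of distances in a bipartite graph. The essential fact is that in a bipartite graph with partite sets $A$ and $B$, every walk between two vertices of the same partite set has even length, while every walk between vertices of different partite sets has odd length (since any walk alternates between $A$ and $B$). In particular, for any two vertices $u,v$, the distance $d(u,v)$ is even precisely when $u$ and $v$ belong to the same partite set. I would state this observation first, since it drives the entire argument.

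The key step is then to prove the following claim: no vertex of $G$ can be equidistant from two vertices lying in different partite sets. To see this, suppose $x\in A$ and $y\in B$, and let $w$ be an arbitrary vertex. If $w\in A$, then $d(x,w)$ is even while $d(y,w)$ is odd; if $w\in B$, then $d(x,w)$ is odd while $d(y,w)$ is even. In either case $d(x,w)\neq d(y,w)$, so $w$ is not equidistant from $x$ and $y$. This settles the claim.

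With the claim in hand, the first part of the statement follows by contradiction. Suppose $S$ is a distance-equalizer set with $A\not\subseteq S$ and $B\not\subseteq S$. Then there exist $x\in A\setminus S$ and $y\in B\setminus S$, which are distinct because they lie in different partite sets. By the definition of a distance-equalizer set, some $w\in S$ must be equidistant from $x$ and $y$, contradicting the claim. Hence $A\subseteq S$ or $B\subseteq S$. The stated lower bound is then immediate: any distance-equalizer set contains all of $A$ or all of $B$, so $|S|\geq \min\{|A|,|B|\}$, whence $eqdim(G)\geq \min\{|A|,|B|\}$.

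I do not anticipate a serious obstacle: the argument is short and structural rather than computational. The only point requiring care is the parity fact about distances in bipartite graphs, which is standard but should be stated cleanly, as the whole proof rests on the impossibility of equalizing the distances to a vertex in two partite classes of opposite parity.
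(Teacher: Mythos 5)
Your proof is correct and follows essentially the same route as the paper's: both rest on the parity of distances in a bipartite graph to show that no vertex can be equidistant from two vertices in different partite sets, from which the containment $A\subseteq S$ or $B\subseteq S$ and the lower bound follow immediately. Your write-up is merely a more detailed version of the paper's argument.
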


\begin{proof}
		The distance between two vertices in the same partite set is even, while the distance between vertices of different partite sets is odd. Hence, there is no vertex equidistant from two vertices belonging to different partite sets. Therefore, $A\subseteq S$ or  $B\subseteq S$.
\end{proof}

If $G$ is a graph of order $n$, with $n\geq 2$, then any set of vertices of cardinality $n-1$ is obviously a distance-equalizer set. Hence, $n-1$ is an immediate upper bound on the equidistant dimension of nontrivial graphs.
We next prove some upper bounds involving classical graph parameters.

\begin{proposition}\label{generalbounds}
For every  graph $G$  of order $n\geq 2$, the following statements hold.
\begin{enumerate}
  \item[i)] $eqdim(G)\leq n-\Delta(G)$ and the bound is tight whenever $\Delta (G) \ge n/2$;
  \item[ii)] $eqdim(G)\leq n- \omega (G) +1$;
   \item[iii)] $eqdim(G)\leq \frac {n(D(G)-1) +1}{D(G)}$;
  \item[iv)] $eqdim(G)\leq n- \alpha(G)  +1$, whenever $D(G)=2$.

\end{enumerate}
\end{proposition}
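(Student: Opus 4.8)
The plan is to exhibit a concrete distance-equalizer set of size $n - \alpha(G) + 1$ built from a maximum independent set. Let $I$ be a maximum independent set of $G$, so $|I| = \alpha(G)$, and fix an arbitrary vertex $z \in I$. I would take $S = (V(G) \setminus I) \cup \{z\}$; this set has cardinality $(n - \alpha(G)) + 1$, which matches the claimed bound exactly. It then suffices to check that $S$ is a distance-equalizer set.

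The vertices lying outside $S$ are precisely those of $I \setminus \{z\}$. Thus I must show that, for every pair of distinct vertices $x, y \in I \setminus \{z\}$, some vertex of $S$ is equidistant from $x$ and $y$. The key point is that the single vertex $z$ already does the job for all such pairs. Indeed, $x$, $y$, and $z$ all belong to the independent set $I$ and are pairwise distinct, hence pairwise non-adjacent; and since $D(G) = 2$, any two distinct non-adjacent vertices are at distance exactly $2$. Therefore $d(z, x) = d(z, y) = 2$, so $z$ is equidistant from $x$ and $y$, which completes the verification.

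I do not expect a genuine obstacle here, since the argument collapses once the right candidate $S$ is identified; the heart of the matter is simply recognizing that within an independent set the diameter-$2$ hypothesis makes all pairwise distances equal to $2$, so any third vertex of that set automatically equalizes any pair. The only points worth a moment's care are that the hypothesis $D(G) = 2$ together with $n \ge 2$ forces $\alpha(G) \ge 2$ (so that $z$ can be chosen while $I$ still contains other vertices), and the degenerate situation in which $|I \setminus \{z\}| \le 1$, where there are no pairs to equalize and $S$ is a distance-equalizer set vacuously.
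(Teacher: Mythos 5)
Your argument for item~(iv) is correct and is essentially identical to the paper's: both take a maximum independent set $W$, keep one vertex $w\in W$, set $S=(V(G)\setminus W)\cup\{w\}$, and observe that $D(G)=2$ forces $d(w,v)=2$ for every other $v\in W$, so $w$ equalizes every pair outside $S$. Your extra remarks about the degenerate cases are fine but unnecessary. Note, however, that the statement has four parts and you have only addressed~(iv); items~(i)--(iii) (the bounds via $\Delta(G)$, $\omega(G)$, and $D(G)$, together with the tightness claim in~(i)) remain unproven in your proposal.
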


\begin{proof}
\begin{enumerate}
  \item[i)] Let $v$ be a vertex of degree $\Delta(G)$. It is easy to see that the set $S=V(G)\setminus N(v)$ is a distance-equalizer set of cardinality $n-\Delta(G)$, and so
  $eqdim(G)\leq n-\Delta(G)$.
  To prove tightness, let $H_{a,b}$ with $a\geq 1$ and $0\leq b< a$ be the graph with vertex set $\{v, v_1, \ldots, v_a,u_1,\ldots,u_b\}$ and edge set $\{vv_i: i\in[a]\}\cup\{v_iu_i: i\in[b]\}$. This graph has order $a+b+1$ and maximum degree $a$,
  so $eqdim (H_{a,b})\leq  b +1 $
  as we have just seen. Moreover, this graph is bipartite with partite sets $A=\{v,u_1,\ldots, u_b\}$ and $B=\{v_1,\ldots,v_a\}$, and so
  by Proposition~\ref{propbipartitos} we have that  $eqdim (H_{a,b})\geq min\{\vert A \vert, \vert B \vert \} = b +1 $ since $b<a$.
  Hence, $H_{a,b}$ is a graph of order $a+b+1$ and maximum degree $a$, with  $a\geq \frac{a+b+1}{2}$ and $eqdim (H_{a,b})= b +1 $, showing that the given the bound is tight.
  \item[ii)] Let $W$ be a clique of $G$ of cardinality $\omega(G)$ and let $w\in W$. The set $S=(V(G)\setminus W)\cup \{w\}$ is a distance-equalizer set since every vertex not in $S$ is adjacent to $w$, and so $eqdim(G)\leq |S|= n- \omega(G) +1$.
    \item[iii)]  Let $v$ be a vertex of $G$ for which there exits another vertex at distance $D(G)$. For every $i\in [1,D(G)]$, let $V_i$ be the set of vertices at distance $i$ from $v$, and observe that all vertices in $V_i$ are equidistant from $v$. Also, there must exist a set $V_{i_0}$, with $1\le i_0\le D$, having at least $\frac{n-1}{D(G)}$ vertices. Therefore, $V(G)\setminus V_{i_0}$ is a distance-equalizer set, and consequently $eqdim(G)\le n-\frac{n-1}{D(G)}=\frac{n(D(G)-1)+1}{D(G)}$.
  \item[iv)]  Let $W$ be an independent set of order $\alpha(G)$ and let $w\in W$. If $D(G)=2$, then $d(w,v)=2$ for any other vertex $v \in W$. Thus, $S=(V(G)\setminus W)\cup \{w\}$ is a distance-equalizer set and we obtain $eqdim(G)\leq |S|=n- \alpha(G)  +1$.
\end{enumerate}
\end{proof}

  The bound given in Proposition~\ref{generalbounds}(i) is not tight for all values of $\Delta(G) $ and $n$, for example when  $\Delta(G)=2$ and $n\ge 7$. Indeed, the only graphs satisfying $\Delta(G) = 2$ are  paths and cycles and, as it will be seen below, the equidistant dimension of paths and cycles of order $n\ge 7$ is at most $n-3$.



\section{Extremal values}\label{extremal}

In this section we characterize all nontrivial graphs achieving extremal values for the equidistant dimension, concretely, graphs $G$ of order $n\geq 2$ such that $eqdim (G)\in \{ 1,2,n-2,n-1\}$. We also derive a Nordhaus-Gaddum type bound for the equidistant dimension.

\begin{theorem}\label{extremalinferior} For every graph $G$ of order $n\geq 2$, the following statements hold.
	\begin{enumerate}[i)]
		\item $eqdim(G)=1$ if and only if $\Delta(G)=n-1$;
		\item $eqdim(G)=2$ if and only if $\Delta(G)=n-2$.
	\end{enumerate}
\end{theorem}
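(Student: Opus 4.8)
The plan is to treat the two equivalences separately, leaning on Proposition~\ref{generalbounds}(i) and on part (i) to make one direction of part (ii) immediate, and reducing the remaining direction to a structural claim about $2$-element sets. For (i), sufficiency is already observed in the text: a vertex of degree $n-1$ is universal, and a universal vertex forms a distance-equalizer set of size $1$. For necessity, suppose $\{w\}$ is a distance-equalizer set. Then every pair $x,y\in V(G)\setminus\{w\}$ must be equidistant from $w$, which forces $d(\cdot,w)$ to be constant on the $n-1$ vertices of $V(G)\setminus\{w\}$. Since $G$ is connected with $n\ge 2$, $w$ has a neighbour, so this common distance is $1$; hence $w$ is adjacent to every other vertex and $\deg(w)=n-1=\Delta(G)$. (For $n=2$ the condition is vacuous, and $\Delta(G)=1=n-1$ in any case.)

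For (ii), one direction is short: if $\Delta(G)=n-2$, then Proposition~\ref{generalbounds}(i) gives $eqdim(G)\le n-\Delta(G)=2$, while part (i) gives $eqdim(G)\ne 1$ because $\Delta(G)\ne n-1$; hence $eqdim(G)=2$. The substance lies in the converse. I would first isolate as a claim the following characterisation: a set $\{u,v\}$ is a distance-equalizer set if and only if $d(\cdot,u)$ is constant on $V(G)\setminus\{u,v\}$ or $d(\cdot,v)$ is constant on $V(G)\setminus\{u,v\}$. To prove it, assign to each $z\in V(G)\setminus\{u,v\}$ the pair $(d(z,u),d(z,v))$; the distance-equalizer property says exactly that any two such pairs agree in at least one coordinate. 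Fixing one pair $(a,b)$, I argue that unless all pairs share the first coordinate or all share the second, one finds a pair $(a',b)$ and a pair $(a,b')$ with $a'\ne a$ and $b'\ne b$, which disagree in both coordinates, a contradiction. This yields the claim.

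With the claim in hand I would prove the converse by contraposition: assuming $\Delta(G)\le n-3$ (which forces $n\ge 4$), I show no $2$-set $\{u,v\}$ is a distance-equalizer set, so $eqdim(G)\ge 3\ne 2$. By the claim it suffices to rule out $d(\cdot,u)$ being constant, say equal to $k$, on the $n-2$ vertices of $V(G)\setminus\{u,v\}$ (the case of $v$ is symmetric). If $k=1$, then $u$ is adjacent to all $n-2$ of them, so $\deg(u)\ge n-2$, contradicting $\Delta(G)\le n-3$. If $k\ge 2$, then none of these vertices is adjacent to $u$, so $u$'s only possible neighbour is $v$ and $u$ is a leaf with support $v$; consequently $d(z,v)=k-1$ for every $z\in V(G)\setminus\{u,v\}$. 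For $k=2$ this makes $v$ adjacent to all $n-2$ vertices together with $u$, so $\deg(v)\ge n-1$, again a contradiction; for $k\ge 3$ it makes $v$ adjacent only to $u$, disconnecting $\{u,v\}$ from the remaining $n-2\ge 2$ vertices, contradicting connectivity. This exhausts the cases and gives $\Delta(G)=n-2$.

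The main obstacle is the leaf case $k\ge 2$ of the converse: there the degree of $u$ yields no immediate contradiction, and one must transfer the information to $v$ via the support relation $d(u,\cdot)=1+d(v,\cdot)$ and then invoke connectivity to eliminate the subcase $k\ge 3$. The coordinate (grid) argument underlying the claim is the other point requiring care, since it is what legitimises reducing the pairwise condition to a single vertex realising constant distance.
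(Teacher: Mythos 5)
Your proof is correct. Part (i) and the forward direction of part (ii) match the paper's argument. For the converse of part (ii) you take a genuinely different route. The paper argues directly from a distance-equalizer set $\{u,v\}$: it first shows that $N(u)\setminus\{v\}\subseteq N(v)\setminus\{u\}$ or vice versa (otherwise a private neighbour of $u$ and a private neighbour of $v$ would be separated by both $u$ and $v$), then shows $V(G)=N[u]\cup\{v\}$ by splitting on whether $N(v)\setminus\{u\}$ is empty, and finally uses part (i) to rule out $v\in N(u)$, so that $\Delta(G)=\deg(u)=n-2$. You instead isolate a stronger key lemma --- $\{u,v\}$ is a distance-equalizer set if and only if one of $d(\cdot,u)$, $d(\cdot,v)$ is constant on $V(G)\setminus\{u,v\}$ --- whose proof via pairs agreeing in a coordinate is sound (if neither coordinate is constant, one finds pairs $(a',b)$ and $(a,b')$ with $a'\ne a$ and $b'\ne b$, which disagree in both coordinates), and you then finish by contraposition with a case analysis on the constant value $k$; your leaf case $k\ge 2$ plays the role of the paper's case $N(v)\setminus\{u\}=\emptyset$, with connectivity eliminating $k\ge 3$. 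Your lemma buys a complete, reusable characterization of two-element distance-equalizer sets, at the cost of a slightly longer case analysis; the paper's neighbourhood argument is more ad hoc but shorter. One small point to make explicit: the contrapositive must also dispose of the case $\Delta(G)=n-1$, which follows at once from part (i) since then $eqdim(G)=1\ne 2$.
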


\begin{proof}
	\begin{enumerate}[$i)$]
		\item If $\Delta(G)=n-1$ then $eqdim(G)\le n-(n-1)=1$ by Proposition~\ref{generalbounds}(i), and so $eqdim(G)=1$. 		
		Conversely, if $eqdim (G)=1$ then there exists a vertex $v$ such that $S=\{ v \}$ is a distance-equalizer set of $G$. We claim that $v$ has degree $n-1$. Indeed, suppose on the contrary that there is a vertex $u$ not adjacent to $v$. Then, since there is at least a vertex $w$ adjacent to $v$, we have  $d(v,w)=1\not=d(v,u)$. Hence, $\{ v\}$ is not a distance-equalizer set of $G$, a contradiction.
		Therefore, $G$ has maximum degree $n-1$.
	
		\item
		If $G$ has maximum degree $n-2$, then $eqdim(G)\le n-(n-2)=2$ by Proposition~\ref{generalbounds}(i), and $eqdim(G)\not=1$ by item (i). Hence, $eqdim(G)=2$.	Conversely, suppose that $ eqdim(G)=2$ and let $S=\{u,v\}$ be a distance-equalizer set of $G$.
	We first prove  that
	$N(u)\setminus \{ v\}\subseteq N(v)\setminus \{u\}$ or $N(v)\setminus \{ u\}\subseteq N(u)\setminus \{v\}$.
	Suppose on the contrary that $N(u)\setminus \{ v\}\nsubseteq N(v)\setminus \{u\}$ and $N(v)\setminus \{ u\}\nsubseteq N(u)\setminus \{v\}$.  Then, there exist vertices $x,y$ such that
	$x\in N(u)\setminus \{ v\}$ and $x\notin N(v)\setminus \{u\}$,
	$y\in N(v)\setminus \{ u\}$ and $y\notin N(u)\setminus \{v\}$. Thus,
	 vertices $x,y$ verify that $d(x,u)=1\neq d(y,u)$ and
	$d(y,v)=1\neq d(x,v)$, contradicting that  $S$ is a distance-equalizer set.    	
	Without loss of generality, we assume $N(v)\setminus \{ u\}\subseteq N(u)\setminus \{v\}$.

	Next, we prove that $V(G)=N[u]\cup \{v\}$.
	Suppose on the contrary  that there is a vertex $z\notin N[u]\cup \{v\}$.
	Thus, $d(z,u)\ge 2$ and $d(z,v)\ge 2$.
	If $N(v)\setminus \{u\}$ is nonempty, then for any vertex  $x\in N(v)\setminus \{u\}$ we have $d(x,u)=1\neq d(z,u)\ge 2$ and $d(x,v)=1\neq d(z,v)\ge 2$, contradicting that $S$ is a distance-equalizer set.
	Otherwise $N(v)\setminus \{u\}$ is empty, and so $v$ is a leaf with $u$ as its support vertex. Thus, for any $y\in N(u)\setminus \{v\}$, we have $d(y,u)=1\neq d(z,u)\ge 2$ and $d(y,v)=2\not= d(z,v)=d(z,u)+1\ge 3$, contradicting again that $S$ is distance-equalizer.
	
	Finally, we have that $\Delta(G) \not= n-1$ by the preceding item. Hence, $v\notin N(u)$  and $eqdim (G)=deg(u)=n-2$.
	\end{enumerate}
\end{proof}

\begin{theorem}\label{extremalsuperior}
	For any graph $G$ of order $n$, the following statements hold.
	\begin{enumerate}[i)]
		\item If $n\ge 2$ then $eqdim(G)=n-1$ if and only if $G$ is a path of order $2$.
		\item If $n\ge 3$ then $eqdim(G)=n-2$ if and only if $G \in \{ P_3,P_4,P_5, P_6, C_3,C_4, C_5\}$.
	\end{enumerate}
\end{theorem}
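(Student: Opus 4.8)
The plan is to treat the two parts separately and to reduce the harder second part to paths and cycles before doing any case analysis.

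For part (i), if $n=2$ then $G=P_2$ and $eqdim(P_2)=1=n-1$ (a singleton is vacuously a distance-equalizer set, whereas the empty set fails on the pair $\{1,2\}$). For $n\ge 3$ I would show that $eqdim(G)\le n-2$ always, so the value $n-1$ is never attained: if $D(G)\le n-2$ then Proposition~\ref{generalbounds}(iii) gives $eqdim(G)\le n-\frac{n-1}{D(G)}<n-1$, hence $eqdim(G)\le n-2$; and if $D(G)=n-1$ then $G=P_n$, for which $V(G)\setminus\{1,3\}$ is a distance-equalizer set of size $n-2$ because $2$ equalizes the only omitted pair $\{1,3\}$. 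Thus $eqdim(G)=n-1$ exactly when $n=2$, i.e.\ when $G=P_2$.

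The engine for part (ii) is Proposition~\ref{generalbounds}(i): from $eqdim(G)=n-2\le n-\Delta(G)$ we get $\Delta(G)\le 2$, and since $G$ is connected of order $n\ge 3$ it must be a path $P_n$ or a cycle $C_n$. This collapses the classification to estimating $eqdim(P_n)$ and $eqdim(C_n)$. For the necessity direction I would then rule out large orders by exhibiting distance-equalizer sets of size $n-3$. For $P_n$ with $n\ge 7$, omitting $\{1,3,7\}$ works: the three omitted vertices share a parity and the three pairwise midpoints $2,4,5$ survive in the set. For cycles the parity of $n$ intervenes: for even $n\ge 6$ I would omit $\{1,3,5\}$ (forced to be monochromatic by the bipartition, per Proposition~\ref{propbipartitos}), equalizing $\{1,5\}$ via the far midpoint $3+n/2$ since its near midpoint $3$ is itself omitted; for odd $n\ge 7$ I would omit $\{1,2,4\}$, which has no three-term arithmetic progression in $\mathbb{Z}_n$, so that the unique equidistant vertex of each omitted pair (the $\mathbb{Z}_n$-average, as $2$ is invertible) stays in the set. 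This yields $eqdim(P_n)\le n-3$ for $n\ge 7$ and $eqdim(C_n)\le n-3$ for $n\ge 6$, leaving only $P_3,\dots,P_6,C_3,C_4,C_5$.

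For sufficiency I would check the seven survivors. The cases $P_3,C_3$ ($\Delta=n-1$) and $P_4,C_4$ ($\Delta=n-2$) are immediate from Theorem~\ref{extremalinferior}. For $P_5,C_5$ the same theorem forces $eqdim\ge 3$ (as $\Delta=2\notin\{n-1,n-2\}$), and an explicit triple, together with Proposition~\ref{generalbounds}(iii) for $C_5$, gives the matching upper bound $3=n-2$. The delicate case is $P_6$, where $\Delta=2$ only yields $eqdim\ge 3$: I would invoke Proposition~\ref{propbipartitos} with $A=\{1,3,5\}$, $B=\{2,4,6\}$ to conclude that a size-$3$ set must equal $A$ or $B$, and then observe that the pair $\{2,6\}$ (resp.\ $\{1,5\}$) loses its unique equalizer $4$ (resp.\ $3$), so no such set exists and $eqdim(P_6)=4$.

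The reduction via $\Delta(G)\le 2$ is clean, so I expect the real work to lie in the cycle upper bounds and the $P_6$ lower bound. In the cycles one must respect the parity obstruction --- adjacent vertices of an even cycle have no equalizer at all, and same-parity vertices only equalize through the correct midpoint --- which is exactly why no single omitted triple works uniformly and why the even/odd split (and the progression-free choice for odd $n$) is essential; pinning $eqdim(P_6)=4$ is the other place that needs a genuine, if short, argument rather than a direct appeal to the earlier bounds.
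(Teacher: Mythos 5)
Your proposal is correct and follows essentially the same route as the paper: part (ii) is reduced via Proposition~\ref{generalbounds}(i) to $\Delta(G)\le 2$, hence to paths and cycles, large orders are then excluded by explicit distance-equalizer sets of size $n-3$, and you additionally spell out the small-case verifications (notably $eqdim(P_6)=4$) that the paper dismisses as a straightforward computation. One side remark is inaccurate, though it does not affect your argument: the even/odd split for cycles is not essential, since the paper's single omitted triple $\{1,3,7\}$ works uniformly for paths and for all cycles of order $n\ge 7$ (the three vertices are odd, hence monochromatic in the even case, and the near midpoints $2$, $5$, $4$ all remain in the set).
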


\begin{proof}
	\begin{enumerate}[i)]
		\item It is obvious that $eqdim(P_2)=1$. Conversely, if $G$ is a graph with $eqdim(G)=n-1$, then $\Delta(G)=1$  by Proposition~\ref{generalbounds}(i), and the only connected graph with maximum degree equal to 1 is the path of order 2.
		
		\item A straightforward computation shows that the graphs $P_3$, $P_4$, $P_5$, $P_6$, $C_3$, $C_4$ and $C_5$ have equidistant dimension equal to the order minus 2.
		Conversely, if $G$ is a graph with $eqdim(G)=n-2$, then $\Delta(G)\leq 2$   by Proposition~\ref{generalbounds}(i).
As we have seen above, the path of order 2 is the only connected graph with maximum degree 1. Hence,
		$\Delta(G)=2$, that is, $G$ is a path or a cycle of order at least 3.
		It is easy to see that in both cases the set $[n]\setminus \{ 1,3,7\}$ is a distance-equalizer set whenever $n\ge 7$, and so $eqdim (G)\le n -3$ but $eqdim(G)=n-2$. Therefore, $3\leq n\leq 6$ and consequently $G\in \{ P_3,P_4,P_5, P_6, C_3,C_4, C_5\}$ since $eqdim (C_6)=3\not= 6-2$.  		
	\end{enumerate}	
	
\end{proof}

\begin{corollary}
	If $G$ is a graph of order $n\geq 7$, then
	$1\le eqdim(G) \le n-3.$
\end{corollary}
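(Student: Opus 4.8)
The plan is to deduce this corollary directly from the extremal characterizations already obtained in Theorem~\ref{extremalsuperior}, so essentially no new argument is needed. For the lower bound, I would simply observe that $eqdim(G)\ge 1$ holds trivially: the empty set cannot be a distance-equalizer set of any graph with at least two vertices, since there would be no vertex available to equalize a pair $x,y\in V(G)$, and here $n\ge 7\ge 2$.

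For the upper bound, the starting point is the trivial inequality $eqdim(G)\le n-1$, which is valid for every graph of order $n\ge 2$ because any set of $n-1$ vertices is automatically a distance-equalizer set (this was already noted right before Proposition~\ref{generalbounds}). It then suffices to rule out the two top values $n-1$ and $n-2$. By Theorem~\ref{extremalsuperior}(i), the equality $eqdim(G)=n-1$ forces $G=P_2$, a graph of order $2$; and by Theorem~\ref{extremalsuperior}(ii), the equality $eqdim(G)=n-2$ forces $G\in\{P_3,P_4,P_5,P_6,C_3,C_4,C_5\}$, all of order at most $6$. Since $G$ has order $n\ge 7$, it matches none of these exceptional graphs, hence $eqdim(G)\notin\{n-1,n-2\}$. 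Combining this with $eqdim(G)\le n-1$ gives $eqdim(G)\le n-3$, as claimed.

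I do not expect any genuine obstacle here, as every step reduces to invoking results proved earlier: the corollary is a repackaging of the extremal characterizations. The only point worth double-checking is that the orders of all the exceptional graphs listed in Theorem~\ref{extremalsuperior} are strictly less than $7$, which is immediate from the list itself; this is precisely why the threshold $n\ge 7$ appears in the statement.
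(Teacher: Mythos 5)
Your proof is correct and matches the paper's intended argument: the corollary is stated without proof immediately after Theorem~\ref{extremalsuperior}, precisely because it follows by combining the trivial bounds $1\le eqdim(G)\le n-1$ with the fact that all graphs attaining $n-1$ or $n-2$ have order at most $6$. No issues.
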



Now, we provide a Nordhaus-Gaddum type bound on the equidistant dimension. Nordhaus-Gaddum type inequalities establish bounds on the sum of a parameter for a graph and its complement.
Recall that the \emph{complement} of a graph $G$, denoted by $\overline{G}$, is the graph on the same vertices as $G$ and two vertices are adjacent in $\overline{G}$ if and only if they are not adjacent in $G$.
Also, a graph $G$ is {\it doubly connected } if both $G$ and $\overline{G}$ are connected.
Note that nontrivial doubly connected graphs have order at least $4$.

The following result is a direct consequence of Proposition \ref{generalbounds}(i).

\begin{proposition}\label{doublyconnected}
	If $G$ is a doubly connected graph, then $eqdim ( \overline{G} )\leq \delta (G)+1$.
\end{proposition}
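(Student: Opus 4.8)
The plan is to invoke Proposition~\ref{generalbounds}(i), applied not to $G$ itself but to its complement $\overline{G}$, and then to convert the maximum degree of $\overline{G}$ into the minimum degree of $G$ by means of the standard degree identity for complements. This is exactly the sense in which the result is ``a direct consequence'' of that proposition.

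First I would observe that, because $G$ is doubly connected, the complement $\overline{G}$ is itself connected, and hence is a legitimate graph in the setting of this paper; in particular Proposition~\ref{generalbounds}(i) may be applied to it. Writing $n$ for the common order of $G$ and $\overline{G}$, this at once gives $eqdim(\overline{G}) \leq n - \Delta(\overline{G})$.

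The crux is then the elementary relation $\deg_{\overline{G}}(v) = (n-1) - \deg_{G}(v)$, valid for every vertex $v$, which holds because in $\overline{G}$ a vertex is joined exactly to those $n-1$ other vertices from which it was separated in $G$. Maximizing over $v$ turns this into $\Delta(\overline{G}) = (n-1) - \delta(G)$. Substituting into the displayed bound yields $eqdim(\overline{G}) \leq n - \bigl((n-1) - \delta(G)\bigr) = \delta(G) + 1$, which is precisely the claimed inequality.

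I do not anticipate any genuine obstacle here: once the complement degree identity is recalled, the argument is a one-line substitution. The only point deserving a moment's care is checking that $\overline{G}$ meets the hypotheses of the earlier proposition --- in particular that it is connected, so that it fits the paper's standing convention --- and this is exactly what the double-connectedness assumption provides (it also forces $n \geq 4 \geq 2$, so the order requirement of Proposition~\ref{generalbounds} is satisfied).
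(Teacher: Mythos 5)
Your proof is correct and is exactly the argument the paper intends: the paper gives no written proof, stating only that the result is a direct consequence of Proposition~\ref{generalbounds}(i), and your application of that proposition to $\overline{G}$ together with the identity $\Delta(\overline{G})=(n-1)-\delta(G)$ is the standard way to realize that claim. The attention you pay to the connectedness of $\overline{G}$ (guaranteed by double connectedness) is appropriate, since the equidistant dimension is only defined for connected graphs.
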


\begin{theorem} If $G$ is a doubly connected graph of order $n\ge 4$, then $4\leq eqdim(G)+eqdim(\overline{G})\leq n+1.$
	{Moreover, these bounds are tight.}
\end{theorem}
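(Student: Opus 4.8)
The plan is to derive both bounds directly from the results already established in the excerpt and then realize each extreme with a self-complementary graph.

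For the lower bound, I would first argue that a doubly connected graph of order $n\ge 4$ can have no universal vertex, and neither can its complement. Indeed, a universal vertex of $G$ would be isolated in $\overline{G}$, contradicting that $\overline{G}$ is connected of order at least $2$; hence $\Delta(G)\le n-2$, and symmetrically $\Delta(\overline{G})\le n-2$. By Theorem~\ref{extremalinferior}(i), $eqdim=1$ is equivalent to maximum degree $n-1$, so both inequalities force $eqdim(G)\ge 2$ and $eqdim(\overline{G})\ge 2$, giving $eqdim(G)+eqdim(\overline{G})\ge 4$.

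For the upper bound, the key step is to apply Proposition~\ref{doublyconnected} to both $G$ and its complement; this is legitimate because $\overline{\overline{G}}=G$, so $\overline{G}$ is doubly connected as well. This yields $eqdim(\overline{G})\le \delta(G)+1$ and $eqdim(G)\le \delta(\overline{G})+1$. Using the identity $\delta(\overline{G})=n-1-\Delta(G)$ together with $\delta(G)\le \Delta(G)$, I would add the two inequalities to obtain
\[
eqdim(G)+eqdim(\overline{G})\le \big(n-\Delta(G)\big)+\big(\delta(G)+1\big)\le n+1.
\]

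Finally, for tightness I would exploit self-complementary graphs, where $eqdim(G)=eqdim(\overline{G})$ collapses the sum to twice a single value. The path $P_4$ is self-complementary and doubly connected, with $eqdim(P_4)=n-2=2$ by Theorem~\ref{extremalsuperior}(ii), so its sum equals $4$, realizing the lower bound; the cycle $C_5$ is self-complementary and doubly connected, with $eqdim(C_5)=n-2=3$, so its sum equals $6=n+1$, realizing the upper bound. The only mild subtlety I anticipate is verifying that these small graphs are genuinely doubly connected and recalling their equidistant dimensions from the extremal classification; there is no essential difficulty, since once Proposition~\ref{doublyconnected} is applied to both $G$ and $\overline{G}$, the upper bound follows immediately from $\delta(G)\le\Delta(G)$.
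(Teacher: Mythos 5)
Your proof is correct and follows essentially the same route as the paper: the lower bound comes from the absence of universal vertices in $G$ and $\overline{G}$ via Theorem~\ref{extremalinferior}(i), and the upper bound from adding $eqdim(G)\le n-\Delta(G)$ (Proposition~\ref{generalbounds}(i), which your application of Proposition~\ref{doublyconnected} to $\overline{G}$ merely re-derives) to $eqdim(\overline{G})\le\delta(G)+1$. The only difference is in the tightness examples: the paper exhibits the bistars $K_2(2,n-2)$ to attain the lower bound for every $n\ge 4$, while you use the single self-complementary graph $P_4$ (which is exactly $K_2(2,2)$); both arguments use $C_5$ for the upper bound.
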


\begin{proof}
First observe that a graph $G$ satisfying
		$eqdim(G)=1$ is not doubly connected. Indeed, in such
		a case, by Theorem~\ref{extremalinferior}(i), it contains a
		universal vertex $v$ that is an isolated vertex in $\overline{G}$. Hence,
		$eqdim (G)\ge 2$ and $eqdim (\overline{G})\ge 2$, whenever $G$ is doubly connected, and the lower bound follows.
		The family of  bistars $G=K_2(2,n-2)$, $n\ge 4$, provides examples of graphs
		attaining the lower bound for every $n\ge 4$. As we will see in Theorem~\ref{bipartitoscompletos}(iii) below, these graphs satisfy $eqdim(G)=2$,
		and it is easy to check that $eqdim(\overline{G})=2$.
		\medskip
		
		The upper bound is a direct consequence of Propositions \ref{generalbounds}(i) and \ref{doublyconnected}, because
		\begin{align*}
		eqdim(G)+eqdim(\overline{G})\leq n- \Delta (G)+ \delta (G)+1 \le n+1.
		\end{align*}

		The  cycle  $C_5$ attains the upper bound, since $\overline{C_5}=C_5$ and it is easy to check that $eqdim(C_5)=3$.
\end{proof}


\section{Equidistant dimension of some families of graphs}\label{families}

In this section we study the equidistant dimension of some families of graphs,
concretely of complete, complete bipartite and complete multipartite graphs, bistars, paths, cycles, and Johnson graphs.

\subsection{Complete graphs, complete multipartite graphs and bistars}\label{KKKKK}

Recall that, for every positive integer $n$, the
 \emph{complete graph} $K_n$ is the graph of order $n$ in which every pair of  vertices is connected by an edge. Also,
the {\em complete bipartite graph} $K_{r,s}$,
  with $r,s$  positive integers, is the bipartite graph with partite sets  $A,B$  such that  $|A|=r$ and $|B|=s$, and
  edge set given by all  pairs $vu$ with $v\in A$ and $u\in B$.
  More generally, a {\it complete $p$-partite} graph, denoted by $K_{n_1,\dots, n_p}$, is a graph with set of vertices $A_1\cup \dots \cup A_p$
such that $A_1, \dots ,A_p$, which are called its {\it partite sets}, are pairwise disjoint, verify $|A_i|=n_i\ge 1$, and two vertices are adjacent if and only if they belong to $A_i$ and $A_j$, respectively, with $i\not= j$. Note that complete bipartite graphs are thus 2-partite graphs.

\begin{theorem}\label{bipartitoscompletos} Let $n,r,s,p,n_1,\dots, n_p$ be positive integers such that $n\ge 2$, $s\ge r$, $p\ge 3$
and $n_p\ge \dots \ge n_1\ge 1$. Then, the following statements hold.
	\begin{enumerate}[i)]
		\item  $ eqdim(K_{n})=1$;
		\item  $ eqdim(K_{r,s})=r$;
		\item  $ eqdim(K_2 (r,s))=r$;
		\item $ eqdim(K_{n_1,\dots, n_p})=\min \{n_1,3\}$
	\end{enumerate}
\end{theorem}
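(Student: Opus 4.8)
The plan is to prove the four statements of Theorem~\ref{bipartitoscompletos} in turn, observing that the first three are easy consequences of earlier results while the fourth, concerning complete $p$-partite graphs with $p\ge 3$, is the substantive case.

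For $(i)$, the complete graph $K_n$ has a universal vertex, so $eqdim(K_n)=1$ is immediate from the remark following the definition (or from Theorem~\ref{extremalinferior}(i), since $\Delta(K_n)=n-1$). For $(ii)$, the complete bipartite graph $K_{r,s}$ with $s\ge r$ has diameter $2$, and any single vertex $w$ in the larger part $B$ is equidistant (at distance $2$) from every pair of vertices in $B$ and (at distance $1$) from every vertex in $A$; hence one checks that $S=A$ works, giving $eqdim\le r$, while Proposition~\ref{propbipartitos} gives $eqdim\ge\min\{r,s\}=r$. For $(iii)$, the bistar $K_2(r,s)$ is a tree with two support vertices and $r-1$, $s-1$ leaves respectively; Lemma~\ref{hoja-soporte} together with the bipartite bound of Proposition~\ref{propbipartitos} should pin the value at $r$, where the smaller leaf-set forces the count.

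The heart of the theorem is $(iv)$, the formula $eqdim(K_{n_1,\dots,n_p})=\min\{n_1,3\}$ for $p\ge 3$. The plan is to split into the cases $n_1=1$, $n_1=2$, and $n_1\ge 3$. First I would record that every complete multipartite graph with $p\ge 2$ nontrivial parts has diameter $2$, so all distances are $1$ (between different parts) or $2$ (within a part). The key structural fact is then that a vertex $w$ is equidistant from $x$ and $y$ precisely when either $x,y$ lie in the same part (both at distance $2$ from any $w$ outside that part, distance forced otherwise), or $x,y$ lie in distinct parts both different from $w$'s part (both at distance $1$ from $w$). For the upper bound $eqdim\le\min\{n_1,3\}$, when $n_1\ge 3$ I would exhibit a three-vertex distance-equalizer set by taking, say, three vertices drawn so that every unresolved pair outside $S$ is handled; when $n_1\le 2$ I would take $S$ to be the smallest part (of size $n_1$). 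For the lower bound, when $n_1\ge 3$ the argument must show that no two vertices suffice, which I expect to be the main obstacle: I would argue that with $|S|=2$ there necessarily exist two vertices $x,y$ in two different parts, both avoiding the parts meeting $S$, that no $w\in S$ can equalize—using that the only way a single $w$ equalizes a cross-part pair is to lie in a third part, and with only two vertices in $S$ one cannot simultaneously cover all the dangerous cross-part and same-part pairs once some part has at least three vertices. When $n_1\le 2$, the lower bound $eqdim\ge n_1$ is supplied directly by a bipartite-type or pigeonhole argument, since a part of size $n_1\le 2$ together with the multipartite distance structure leaves too few equalizing vertices.

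The main difficulty I anticipate is the tight lower bound in the $n_1\ge 3$ subcase, i.e.\ ruling out $|S|=2$: one must carefully enumerate where the two chosen vertices of $S$ sit (two in one part, or one each in two parts) and in each configuration produce an explicit unresolved pair from $V(G)\setminus S$, exploiting the presence of a part with at least three vertices to guarantee such a pair always survives. The remaining subcases reduce cleanly to Proposition~\ref{propbipartitos} and the diameter-$2$ bound of Proposition~\ref{generalbounds}(iv), so the write-up should foreground the case analysis for the lower bound while keeping the upper-bound constructions short and explicit.
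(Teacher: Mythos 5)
Your overall strategy is sound and would yield a correct proof, but it diverges from the paper in the key step. For part (iv) the paper gets the lower bound essentially for free from Theorem~\ref{extremalinferior}: if $n_1\ge 3$ every vertex has degree at most $n-3$, so $\Delta(G)\le n-3$ and hence $eqdim(G)\ge 3$ by the characterizations $eqdim=1\Leftrightarrow\Delta=n-1$ and $eqdim=2\Leftrightarrow\Delta=n-2$; likewise the cases $n_1=1$ and $n_1=2$ are dispatched entirely by $\Delta(G)=n-1$ and $\Delta(G)=n-2$ with no explicit construction. You instead propose a direct case analysis ruling out $|S|=2$ by locating the two vertices of $S$ in at most two parts and exhibiting an unequalized cross-part pair $x\in A_i\setminus S$, $y\in A_j\setminus S$ with $S\subseteq A_i\cup A_j$; this works (your structural fact about which $w$ equalize which pairs is correct, and $n_1\ge 3$ guarantees the surviving pair exists), and it has the merit of being self-contained, but it is longer than the paper's degree argument. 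Two spots need tightening before this is a complete proof. First, in (iii) you only cite lower-bound tools (Lemma~\ref{hoja-soporte} and Proposition~\ref{propbipartitos}); you must still exhibit a distance-equalizer set of size $r$, namely the partite set of cardinality $r$, which contains a center adjacent to every vertex outside it. Second, in (iv) with $n_1\ge 3$ the phrase ``three vertices drawn so that every unresolved pair is handled'' is not a construction; you should say explicitly that $S$ consists of one vertex from each of three distinct parts (possible since $p\ge 3$), so that for any cross-part pair $x\in A_i$, $y\in A_j$ some vertex of $S$ lies in a third part and is at distance $1$ from both. Also note that same-part pairs are equalized by \emph{every} vertex outside the pair, so they impose no constraint; only cross-part pairs matter in your case analysis.
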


\begin{proof}
	\begin{enumerate}[$i)$]
		
		\item It is a direct consequence of Theorem~\ref{extremalinferior}(i).
		
		\item  By Proposition~\ref{propbipartitos}, $eqdim(K_{r,s})\geq r$. Since both partite sets of $K_{r,s}$ are distance-equalizer sets, we have $eqdim(K_{r,s})=r$.

		\item Note that $K_2 (r,s)$ is a bipartite graph with partite sets of cardinality $r$ and $s$. Hence, by Proposition~\ref{propbipartitos},
		$eqdim(K_2{(r,s)})\geq r$. Moreover, if $A$ is the partite set of cardinality $r$, then $A$ is a distance-equalizer set because it contains a vertex at distance 1 from every vertex not in $A$.
		
		\item If $n_1=1$ then $K_{n_1,\dots, n_p}$ has a universal vertex and $eqdim(K_{n_1,\dots, n_p})=1$, by Theorem~\ref{extremalinferior}(i).
		 If $n_1=2$ then $K_{n_1,\dots, n_p}$ has maximum degree equal to the order minus 2 and $eqdim(K_{n_1,\dots, n_p})=2$, by Theorem~\ref{extremalinferior}(ii).
		 Otherwise $n_1\ge 3$, and so the maximum degree of $K_{n_1,\dots, n_p}$ is at most the order minus 3 and $eqdim(K_{n_1,\dots, n_p})\ge 3$, by Theorem~\ref{extremalinferior}. Moreover, tt is very easy to verify that any set consisting of 3 vertices from different partite sets is distance-equalizer. Thus, we conclude that $eqdim(K_{n_1,\dots, n_p})=3.$
	\end{enumerate}
\end{proof}

\subsection{Paths}\label{paths}

We next show that distance-equalizer sets and the equidistant dimension of paths are  related with   3-AP-free sets and the function $r(n)$
 introduced by Erd\"{o}s and Tur\'{a}n \cite{erdos}.
  A subset $S\subseteq [n]$ is 3-{\it AP-free} if $a+c \neq 2b$, for every distinct terms $a,b,c \in S$;
 the largest cardinality of a 3-AP-free subset of $[n]$ is denoted by $r(n)$.

We begin by introducing some preliminary results.
A subset of $[n]$ is called \emph{even-sum} if all its elements have the same parity.

\begin{proposition}\label{apfreecamino}
Let $S\subseteq [n]$ for some integer $n$. Then,
	$S$ is a distance-equalizer set of $P_n$ if and only if $[n] \setminus S$ is a 3-AP-free even-sum set.
\end{proposition}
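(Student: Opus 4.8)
The plan is to translate the distance-equalizer condition on $P_n$ into an elementary condition on integers, exploiting the fact that distances in a path are absolute differences. First I would record the key observation: in $P_n$ one has $d(i,j)=|i-j|$, so for distinct $x,y\in[n]$ a vertex $w$ satisfies $d(x,w)=d(y,w)$ if and only if $|x-w|=|y-w|$, which forces $w=(x+y)/2$. Thus $x,y$ admit an equidistant vertex at all precisely when $x+y$ is even, and in that case the equidistant vertex is \emph{unique} and equals the midpoint $(x+y)/2$; moreover this midpoint automatically lies in $[n]$, since it falls strictly between $x$ and $y$. This single observation does essentially all the work, and isolating it cleanly is really the only conceptual step.

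Writing $T=[n]\setminus S$, I would then prove the two implications. For the forward direction, assume $S$ is a distance-equalizer set. If $T$ contained two elements $x,y$ of opposite parity, then $x+y$ would be odd, so by the observation no vertex is equidistant from $x$ and $y$, contradicting that $S$ is distance-equalizer; hence all elements of $T$ share a parity, i.e.\ $T$ is even-sum. If instead $T$ contained a $3$-term arithmetic progression, namely distinct $a,b,c\in T$ with $a+c=2b$, then the unique vertex equidistant from $a$ and $c$ is $b$, which lies in $T$ and therefore not in $S$; again this contradicts the distance-equalizer property. Hence $T$ is also $3$-AP-free.

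For the converse, assume $T$ is a $3$-AP-free even-sum set and take any two distinct $x,y\in T$. Since $T$ is even-sum, $x+y$ is even, so $b=(x+y)/2$ is an integer in $[n]$ and is the vertex equidistant from $x$ and $y$. It remains only to check that $b\in S$: if not, then $b\in T$, and since $x\neq y$ the midpoint $b$ is distinct from both $x$ and $y$, so $x,b,y$ would be a $3$-AP inside $T$, contradicting $3$-AP-freeness. Therefore $b\in S$ furnishes the required equidistant vertex, and $S$ is a distance-equalizer set.

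I do not anticipate a genuine obstacle here: once the midpoint characterization of equidistant vertices is in hand, both directions reduce to the two ways the equidistant vertex can fail to serve, namely it fails to exist (odd sum, ruled out by the even-sum condition) or it exists but lies outside $S$ (a $3$-AP, ruled out by $3$-AP-freeness). The only point requiring a little care is confirming that the midpoint, when integral, is a legitimate vertex of $P_n$ and is distinct from the two endpoints, both of which follow immediately from its lying strictly between $x$ and $y$.
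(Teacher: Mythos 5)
Your proposal is correct and follows essentially the same route as the paper: both arguments rest on the observation that the unique vertex equidistant from $x$ and $y$ in $P_n$ is the midpoint $(x+y)/2$, which exists only when $x+y$ is even, so the two failure modes correspond exactly to $T$ not being even-sum and $T$ containing a $3$-AP. The only cosmetic difference is that the paper obtains the even-sum condition by citing its bipartiteness result (Proposition~\ref{propbipartitos}) rather than re-deriving it from the midpoint characterization as you do.
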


\begin{proof}
	Let us denote by $A$ the set of  vertices of $P_n$ labelled with even numbers  and by $B$ the set of  vertices labelled with odd numbers.
	(Note that $P_n$ is a bipartite graph and $A,B$ are its partite sets).
	Also, let $S$ be a distance-equalizer set of $P_n$ with $|S|=r$.
	By Proposition~\ref{propbipartitos}, either $A \subseteq S$ or $B  \subseteq S$.
	Thus, if  $T= [n] \setminus S=\{t_1, \dots, t_{n-r}\}$, then $t_1,\dots ,t_{n-r}$ have the same parity, that is, $T$ is  a even-sum set.
	Moreover, $(t_i+t_j)/2$  is the only vertex of $P_n$ equidistant from  $t_i$ and $t_j$. Hence $(t_i+t_j)/2 \in S$, that is, $(t_i+ t_j)/2 \notin T$. Then, $T$ is a 3-{\it AP-free} set. 	
	Conversely, suppose that $T=\{t_1, \dots, t_{n-r}\}$ is a 3-AP-free even-sum set. Then, for all pair of vertices $t_i$, $t_j$ of $T$, we have $(t_i+t_j)/2 \in [n] \setminus T$. Hence, $S=[n] \setminus T$ is a distance-equalizer set of $P_n$.
\end{proof}

\begin{corollary} For every positive integer $n$, it holds that
	
	$$ eqdim(P_n)=n-\textrm{max} \{ \, |T| \, : \,T \textrm{ is a 3-AP-free even-sum subset of }[n] \}.$$
	
\end{corollary}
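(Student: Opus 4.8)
The plan is to derive this corollary directly from Proposition~\ref{apfreecamino}, which already does all the substantive work by establishing a bijective correspondence between distance-equalizer sets of $P_n$ and 3-AP-free even-sum subsets of $[n]$ via complementation. The key observation is purely combinatorial: if $S\subseteq [n]$ and $T=[n]\setminus S$, then $|S|=n-|T|$, so shrinking $S$ is exactly the same as enlarging its complement $T$. Thus the minimization problem defining $eqdim(P_n)$ is equivalent, through complementation, to a maximization problem over 3-AP-free even-sum sets.

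Concretely, I would argue as follows. By Proposition~\ref{apfreecamino}, a set $S$ is a distance-equalizer set of $P_n$ if and only if $T=[n]\setminus S$ is a 3-AP-free even-sum set. Taking the minimum of $|S|=n-|T|$ over all such $S$ is therefore equivalent to taking $n$ minus the maximum of $|T|$ over all 3-AP-free even-sum subsets $T$ of $[n]$. This yields
$$eqdim(P_n)=\min\{|S| : S \text{ is a distance-equalizer set of } P_n\}=n-\max\{|T| : T \text{ is a 3-AP-free even-sum subset of } [n]\},$$
which is precisely the claimed identity.

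Since this is an immediate consequence of the proposition, there is no substantive obstacle; the only point worth checking is that both extrema are taken over nonempty families, so the formula is well-defined. The collection of distance-equalizer sets is nonempty because $S=[n]$ (or any $(n-1)$-subset when $n\ge 2$) qualifies, and correspondingly the empty set is vacuously a 3-AP-free even-sum set, so the maximum on the right is well-defined and at least $0$. With well-definedness settled, the equivalence of the two optimization problems transfers verbatim through the complementation bijection of Proposition~\ref{apfreecamino}, completing the argument.
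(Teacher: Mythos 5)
Your proposal is correct and matches the paper's (implicit) reasoning exactly: the corollary is stated as an immediate consequence of Proposition~\ref{apfreecamino}, obtained by passing from minimizing $|S|$ to maximizing $|T|=n-|S|$ under the complementation correspondence, which is precisely what you do. The extra remark on well-definedness is fine but not needed.
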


\begin{proposition}\cite{cockaine}\label{ap-free} Let $ k_1,..., k_r, n$ be different positive integers. Then, one of the sets
	$\{2k_1-1,2k_2 -1,...,  2k_r -1\}$ or $\{2k_1,2k_2,...,  2k_r\}$ is a 3-AP-free even-sum set of $[n]$ if and only if
	$ \{k_1,..., k_r\}$ is a 3-AP-free subset of $\Big[ \, \lceil n/2 \rceil \, \Big]$.
\end{proposition}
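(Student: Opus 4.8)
The plan is to separate the two conditions hidden in the phrase ``3-AP-free even-sum set of $[n]$'': the \emph{even-sum} requirement, the \emph{3-AP-free} requirement, and the \emph{containment} in $[n]$. The even-sum requirement is immediate and costs nothing, since every element of $\{2k_1,\dots,2k_r\}$ is even and every element of $\{2k_1-1,\dots,2k_r-1\}$ is odd, so both candidate sets automatically consist of elements of a single fixed parity. Thus the real content of the statement is that one of the two sets is simultaneously 3-AP-free and contained in $[n]$ exactly when $\{k_1,\dots,k_r\}$ is 3-AP-free and contained in $[\lceil n/2\rceil]$. I would isolate these as the ``3-AP-free'' part and the ``containment'' part and treat them independently.

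For the 3-AP-free part, the key observation is that both maps $\varphi_0(k)=2k$ and $\varphi_1(k)=2k-1$ are affine with nonzero slope, hence injective, so distinct $k_i$ produce distinct images. Because any affine map of the form $x\mapsto 2x+c$ satisfies $\varphi(a)+\varphi(c')=2\varphi(b)$ if and only if $a+c'=2b$, the relation defining a 3-term arithmetic progression is preserved in both directions. Consequently $\{2k_i\}$ is 3-AP-free if and only if $\{k_i\}$ is, and likewise for $\{2k_i-1\}$. This single affine-invariance remark simultaneously supplies both implications of the 3-AP-free half of the equivalence, so no separate converse argument is needed there.

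The part requiring genuine care --- and which I expect to be the main obstacle --- is matching the containment conditions to the ceiling $\lceil n/2\rceil$, because the correct choice between the odd and even copy depends on the parity of $n$. Here I would split into cases. If $n$ is even, then $\lceil n/2\rceil=n/2$, and $\{2k_i\}\subseteq[n]$ is equivalent to $\max_i k_i\le n/2=\lceil n/2\rceil$, so the even copy is the one to use. If $n$ is odd, then $\lceil n/2\rceil=(n+1)/2$, and $\{2k_i-1\}\subseteq[n]$ is equivalent to $\max_i k_i\le (n+1)/2=\lceil n/2\rceil$ (using $2\cdot\frac{n+1}{2}-1=n$), so the odd copy is the one to use. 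In each case the chosen set lands inside $[n]$ precisely when $\{k_i\}\subseteq[\lceil n/2\rceil]$, which explains why the statement only asserts that \emph{one} of the two sets works.

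Finally I would assemble the biconditional. For the forward direction, assuming $\{k_i\}$ is a 3-AP-free subset of $[\lceil n/2\rceil]$, the parity analysis exhibits the appropriate copy (the even copy when $n$ is even, the odd copy when $n$ is odd) as a 3-AP-free even-sum subset of $[n]$. For the converse, if one of the two sets is a 3-AP-free even-sum subset of $[n]$, then affine invariance forces $\{k_i\}$ to be 3-AP-free, and the containment bound for whichever copy occurs gives $\max_i k_i\le\lceil n/2\rceil$; one checks in both parities that even $\{2k_i-1\}\subseteq[n]$ still yields $\max_i k_i\le\lceil n/2\rceil$, so $\{k_i\}\subseteq[\lceil n/2\rceil]$ regardless of which copy was assumed. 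This completes the equivalence.
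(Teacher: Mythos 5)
Your argument is correct and complete. Note that the paper itself gives no proof of this proposition --- it is quoted from Cockayne and Hedetniemi \cite{cockaine} --- so there is no in-paper argument to compare against; your write-up actually supplies more than the paper does. The decomposition into the even-sum part (free, since each copy has constant parity), the 3-AP-free part (preserved both ways by the affine injections $k\mapsto 2k$ and $k\mapsto 2k-1$, because $\varphi(a)+\varphi(c)=2\varphi(b)$ reduces to $a+c=2b$ for any map $x\mapsto 2x+c_0$), and the containment part is exactly the right way to organize the claim, and your converse correctly checks that either copy lying in $[n]$ forces $\max_i k_i\le\lceil n/2\rceil$. One small simplification you could make: the parity case split in the forward direction is unnecessary, since the odd copy $\{2k_i-1\}$ is contained in $[n]$ precisely when $\max_i k_i\le\lceil n/2\rceil$ for \emph{every} $n$ (as $2\lceil n/2\rceil-1\le n$ always holds), so it alone witnesses the forward implication in both parities; the role of the word ``one'' in the statement is then simply that the even copy may fail the containment when $n$ is odd and $\max_i k_i=\lceil n/2\rceil$. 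Also fix the typo ``even $\{2k_i-1\}\subseteq[n]$'' (presumably ``even if'') in your last paragraph.
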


The equidistant dimension of a path is derived from the results above.

\begin{theorem} \label{camino}
	For every positive integer $n$, it holds that
	
	$$eqdim(P_n)=n-r \left( \Big\lceil {{\frac{n}{2}}} \Big\rceil \right).$$
	
\end{theorem}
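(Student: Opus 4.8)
The plan is to combine the Corollary following Proposition~\ref{apfreecamino} with Proposition~\ref{ap-free}. That Corollary already reduces the computation of $eqdim(P_n)$ to determining the quantity
$$M := \max\{\, |T| : T \text{ is a 3-AP-free even-sum subset of } [n]\,\},$$
so the entire task becomes showing that $M = r(\lceil n/2\rceil)$; the asserted formula then follows by direct substitution into the Corollary.

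First I would use the definition of an even-sum set: all its elements share the same parity, so a nonempty even-sum subset of $[n]$ consists either entirely of odd numbers or entirely of even numbers. The odd elements of $[n]$ are exactly $\{2k-1 : k\in[\lceil n/2\rceil]\}$ and the even elements are exactly $\{2k : k\in[\lfloor n/2\rfloor]\}$. Since the maps $k\mapsto 2k-1$ and $k\mapsto 2k$ are affine bijections, they carry arithmetic progressions to arithmetic progressions and hence both preserve and reflect the 3-AP-free property; this is precisely the content of Proposition~\ref{ap-free}. Consequently the largest 3-AP-free set of odd elements of $[n]$ has size $r(\lceil n/2\rceil)$, the largest 3-AP-free set of even elements has size $r(\lfloor n/2\rfloor)$, and therefore $M=\max\{r(\lceil n/2\rceil),\, r(\lfloor n/2\rfloor)\}$.

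To finish I would invoke the monotonicity of $r$: if $m'\le m$ then $[m']\subseteq[m]$, so every 3-AP-free subset of $[m']$ is also a 3-AP-free subset of $[m]$, whence $r(m')\le r(m)$. Because $\lceil n/2\rceil\ge\lfloor n/2\rfloor$, this yields $M=r(\lceil n/2\rceil)$, and the theorem follows from the Corollary.

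The argument is essentially bookkeeping once Proposition~\ref{ap-free} is available, so I do not anticipate a genuine obstacle. The only point requiring a little care is the asymmetry between the two parities: $[n]$ contains $\lceil n/2\rceil$ odd numbers but only $\lfloor n/2\rfloor$ even ones, so for odd $n$ the two cases really do give different bounds, $r(\lceil n/2\rceil)$ versus $r(\lfloor n/2\rfloor)$. I would therefore be explicit about maximizing over both parities and about using monotonicity to conclude that the odd case dominates, rather than tacitly treating the two quantities as equal.
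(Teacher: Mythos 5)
Your proposal is correct and follows exactly the route the paper intends: the paper gives no explicit proof of Theorem~\ref{camino}, stating only that it "is derived from the results above," namely the Corollary to Proposition~\ref{apfreecamino} together with Proposition~\ref{ap-free}, which is precisely the combination you carry out. Your explicit treatment of the parity asymmetry (odd elements giving $r(\lceil n/2\rceil)$, even elements giving $r(\lfloor n/2\rfloor)$, resolved by monotonicity of $r$) is a correct and welcome filling-in of the detail the paper leaves implicit.
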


Hence, obtaining the equidistant dimension of paths amounts to computing the function
 $r(n)$,  which  has been widely
 studied~\cite{b2,*bloom16,*bloom19,*bourgain,*elkin,*gasarch,*gowers,hb,roth,*sanders,sharma}.
In fact, many papers are devoted to obtain the values of $r(n)$ in some specific cases ($n\leq 23$ and $n=41$~\cite{erdos};
$n\leq 27$ and $41\leq n \leq 43$~\cite{sharma};  $n\le 123$~\cite{sequences}), which
 allows us to compute $eqdim(P_n)$  in all those cases  (see Table~\ref{tab:eqdim_caminosciclos} for $n\leq 20$ and $n=50$).
Also, other works \cite{b2,sequences,roth} provide bounds on $r(n)$ that are useful to approach $eqdim(P_n)$, such as
$$ n^{1-c/\sqrt{\log{n}}}< r(n) < \frac{cn}{\log{\log {n}}}.$$

Besides its relationship with the function $r(n)$, the equidistant dimension of paths is also related with a
 problem concerning covering squares of a chessboard by queens proposed by Cockayne and Hedetniemi~\cite{cockaine}.
 Indeed, the authors are interested in determining the minimum number of queens needed to be placed on the major diagonal
 of a chessboard in order to reach all the remaining squares with a single chess movement.
More formally, a subset $K \subseteq [n]$ is a \emph{diagonal dominating set} if its $|K|$ queens placed
in position $\{(k,k) \, |  \, k \in K\}$ on the black major diagonal of an $n\times n$ chessboard cover the entire board;
the minimum cardinality of a diagonal dominating set is denoted by $diag(n)$.
 It is proved in~\cite{cockaine} that diagonal dominating sets are precisely the complements of 3-AP-free  even-sum sets, which combined
 with Proposition~\ref{apfreecamino}  leads us to see that
the distance-equalizer sets of $P_n$ are the diagonal dominating sets in $[n]$, and consequently $eqdim(P_n)=diag(n)$.

Finally, we do not know the exact value of the equidistant dimension of trees.
However, in this family of graphs, it looks that paths are those graphs needing more
vertices to construct a distance-equalizer set. Indeed, it is easily seen that,
for every pair of vertices of a path, there is at most one equidistant vertex. Hence,
we believe that the following conjecture holds true.

\noindent
\begin{conjecture}\label{conjtrees}
\textit{If $T$ is a tree of order $n$, then $eqdim(T)\leq eqdim(P_n)$.}
\end{conjecture}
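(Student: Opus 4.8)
The plan is to recast the conjecture as a purely combinatorial statement about $3$-AP-free sets in the tree metric, mirroring the exact description of distance-equalizer sets of paths in Proposition~\ref{apfreecamino}. The starting point is a structural observation valid in every tree: since the geodesic between two distinct vertices $x,y$ is unique, a vertex $w$ is equidistant from $x$ and $y$ if and only if the point where the $w$-to-$\{x,y\}$ path meets the $x$--$y$ geodesic is the midpoint $m$ of that geodesic. In particular, $w$ can be equidistant from $x$ and $y$ only when $d(x,y)$ is even (so that $m$ is a vertex), and then the set of such $w$ is exactly the branch of $T$ hanging at $m$ off the $x$--$y$ geodesic. Two consequences are worth recording: a leaf is never a midpoint (midpoints are interior to a geodesic, hence of degree at least two), and the midpoint $m$ itself is always equidistant from $x$ and $y$.

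Using this, I would prove the following sufficient condition. If $T$ is bipartite with partite sets $A,B$ and $R$ is a subset of a single partite set, say $R\subseteq A$, such that no element of $R$ is the midpoint of the geodesic between two other elements of $R$ (call such an $R$ \emph{geodesically $3$-AP-free}), then $S=V(T)\setminus R$ is a distance-equalizer set. Indeed, for $x,y\in R$ the distance $d(x,y)$ is even by Proposition~\ref{propbipartitos}, so the midpoint $m$ exists and is equidistant from $x$ and $y$; geodesic $3$-AP-freeness forces $m\notin R$, i.e.\ $m\in S$, so the pair $x,y$ is equalized. Consequently $eqdim(T)\le n-m(T)$, where $m(T)$ is the maximum size of a geodesically $3$-AP-free subset contained in one partite set. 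Since $eqdim(P_n)=n-r(\lceil n/2\rceil)$ by Theorem~\ref{camino}, the conjecture reduces to the clean extremal lemma
\[
m(T)\ \ge\ r\!\left(\Big\lceil \tfrac{n}{2}\Big\rceil\right)\qquad\text{for every tree }T\text{ of order }n .
\]
This inequality is tight for $T=P_n$, where Proposition~\ref{apfreecamino} gives that $m(P_n)$ is exactly the size of a largest $3$-AP-free even-sum subset of $[n]$.

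To establish the lemma I would exploit precisely the feature distinguishing trees from paths: leaves are ``free'', since any collection of leaves lying in a common partite set is automatically geodesically $3$-AP-free, so leaf-rich trees beat the bound easily (the star being the extreme case). The proposed attack is an induction on the order, deleting either a leaf or, better, a longest path $v_0v_1\cdots v_d$. On this diameter path (an isometric subtree) one already has a geodesically $3$-AP-free set of size $r(\lceil (d+1)/2\rceil)$ coming from the path analysis, and the idea is to augment it greedily with vertices hanging off the path, whose midpoints with path vertices tend to land on interior path vertices kept in $S$ and therefore do not create forbidden midpoints. A useful refinement is that a midpoint $m$ of $x,y\in A$ lies in $A$ only when $d(x,y)\equiv 0 \pmod 4$; midpoints at distance $\equiv 2\pmod 4$ fall in $B$ and are harmless, which should permit many off-path vertices to be added without violating the constraint.

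The hard part will be matching the \emph{exact} value $r(\lceil n/2\rceil)$ rather than merely an asymptotically comparable quantity. The function $r$ is irregular---its increments $r(k+1)-r(k)$ are not easily predictable---so a naive leaf-deletion induction fails: deleting a leaf lowers $n$ by one but may leave $eqdim(P_n)$ unchanged (when $n$ is odd and $r$ increases), whereas adjoining a leaf to a tree can strictly raise its equidistant dimension. Overcoming this seems to require either a compression/exchange argument establishing that, among all trees of a fixed order, the path $P_n$ minimizes $m(T)$, or a direct construction of a geodesically $3$-AP-free set of the prescribed size that simultaneously handles the off-path branches and the delicate midpoints at distance $\equiv 0\pmod 4$. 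Settling the conjecture first for caterpillars and spiders, whose branch structure is simple enough to control these midpoints explicitly, looks like a sensible stepping stone.
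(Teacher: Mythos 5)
The first thing to note is that this statement is Conjecture~\ref{conjtrees}: the paper offers no proof of it, only the informal remark that in a path each pair of vertices admits at most one equidistant vertex. So there is no proof of record to compare against; your argument must stand on its own, and it does not yet do so. What you do establish is correct and worthwhile. In a tree the unique $x$--$y$ geodesic has a vertex midpoint $m$ exactly when $d(x,y)$ is even, a vertex $w$ is equidistant from $x$ and $y$ exactly when its projection onto that geodesic is $m$, and therefore $V(T)\setminus R$ is a distance-equalizer set whenever $R$ lies inside one partite set and no element of $R$ is the midpoint of two others. This yields $eqdim(T)\le n-m(T)$, with $m(T)$ the largest size of such an $R$, and hence reduces the conjecture to the inequality $m(T)\ge r(\lceil n/2\rceil)$. (Be aware this is only a sufficient reduction: in a general tree the vertices equidistant from $x$ and $y$ form the whole branch hanging at $m$, not just $m$ itself, so $n-m(T)$ need not equal $eqdim(T)$ and your lemma is a priori stronger than the conjecture.)

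The genuine gap is that this extremal lemma, which carries the entire content of the conjecture, is never proved. Your own discussion concedes the point: leaf-deletion induction breaks because $r(\lceil n/2\rceil)$ does not change predictably with $n$; the greedy augmentation along a diameter path is only sketched, and you do not verify that off-path vertices can be added without creating forbidden midpoints (two vertices hanging off distinct interior vertices of the path have their midpoint \emph{on} the path, possibly inside the 3-AP-free set you started from); and no complete argument is supplied even for the caterpillar and spider cases you propose as stepping stones. As it stands, the proposal is a clean and potentially useful reformulation together with a plan of attack, not a proof; the conjecture remains open after it.
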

\vspace{0.3cm}

\subsection{Cycles}\label{cycles}

In this section, the equidistant dimension of cycles of even order is completely determined,
while for cycles of odd order, lower and upper bounds in terms of $r(n)$ are given.

\begin{theorem}
	For every positive integer $n \geq 3$, the following statements hold.
	\begin{enumerate}
		\item[i)]
		$eqdim(C_n)=\left\{
		\begin{array}{lll}
		\frac{n}{2} , & \hbox{for n even,} \ \ n\not\equiv 0 \modq; \\
		\\
		\frac{3n}{4}-1, & \hbox{for n even,} \ \ n\equiv 0 \modq. \\
		
	\end{array}
	\right.$
	\item[ii)] $\frac{n-1}{2}\leq   eqdim(C_n) \leq n-r\left ( \Big \lceil\frac{n+1}{4} \Big \rceil  \right), \, \, \hbox{for n odd.}$
	
\end{enumerate}
\end{theorem}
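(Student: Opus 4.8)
The plan is to reduce the whole statement to one arithmetic criterion for equidistance and then separate the even and odd cases. Identifying $V(C_n)$ with $\mathbb{Z}_n$, I would first establish that for distinct vertices $x,y$ a vertex $w$ is equidistant from them if and only if $2w\equiv x+y\pmod n$: indeed $d(x,w)=d(y,w)$ is equivalent to $y-w\equiv\pm(x-w)\pmod n$, and the $+$ sign forces $x=y$. Consequently $S$ is a distance-equalizer set precisely when, for every pair of distinct $x,y$ in $T:=V(C_n)\setminus S$, at least one solution $w$ of $2w\equiv x+y\pmod n$ lies outside $T$. Since this congruence behaves very differently according to whether $2$ is invertible modulo $n$, I would branch on the parity of $n$.

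For even $n$ the graph is bipartite with parts of size $n/2$, so Proposition~\ref{propbipartitos} already gives $eqdim(C_n)\ge n/2$ and forces $T$ to lie inside a single parity class; moreover each admissible pair $x,y$ has exactly two equidistant vertices $w_0$ and $w_0+n/2$. When $n\equiv2\pmod4$ the shift $n/2$ is odd, so $w_0$ and $w_0+n/2$ have opposite parities and one of them always falls in the opposite class; hence a single partite set is a distance-equalizer set and $eqdim(C_n)=n/2$. When $n\equiv0\pmod4$, write $n=4k$ and identify the even vertices with $\mathbb{Z}_{2k}$ through $2a\mapsto a$ (by the rotation-by-one automorphism we may assume $T$ consists of even vertices). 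For the upper bound I would display the explicit complement $T=\{0,2,4,\dots,2k\}$ of size $k+1$ and check directly that every pair of its vertices retains an equalizer outside $T$, giving $eqdim(C_n)\le 3k-1$. For the matching lower bound I would prove that every admissible $T$ has $|T|\le k+1$: if $T$ contains no antipodal pair $\{w,w+n/2\}$ then it meets each of the $k$ antipodal classes at most once, so $|T|\le k$; and if $T$ does contain such a pair, arising from some $a_p\in\mathbb{Z}_{2k}$, then admissibility forbids two distinct elements of the reduced set $\widehat T\subseteq\mathbb{Z}_{2k}$ from summing to $2a_p$, and the reflection $c\mapsto 2a_p-c$, which fixes exactly the two antipodal representatives and pairs up the remaining $2k-2$ elements, yields $|\widehat T|\le 2+(k-1)=k+1$.

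For odd $n$, the element $2$ is invertible, so each pair of vertices has a unique equidistant vertex $w=2^{-1}(x+y)$, and $S$ is a distance-equalizer set if and only if $T$ contains no nontrivial three-term arithmetic progression of $\mathbb{Z}_n$. The lower bound is then a clean pairing argument: fixing $w\in T$ and partitioning $\mathbb{Z}_n\setminus\{w\}$ into the $(n-1)/2$ pairs $\{w+t,w-t\}$, $3$-AP-freeness permits at most one member of each pair to lie in $T$, so $|T|\le (n+1)/2$ and $eqdim(C_n)\ge (n-1)/2$. For the upper bound I would transport a linear $3$-AP-free set into the cycle while controlling wrap-around: taking a maximum $3$-AP-free set $K\subseteq[\lceil(n+1)/4\rceil]$ and, via Proposition~\ref{ap-free}, the associated even-sum set $T=\{2j-1:j\in K\}$ of size $r(\lceil(n+1)/4\rceil)$, one checks that $T\subseteq\{1,\dots,(n+1)/2\}$. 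Since all elements then lie in an interval short enough that $x+y-2z$ never reaches $\pm n$, any cyclic relation $x+y\equiv 2z\pmod n$ among elements of $T$ must hold as a genuine equality $x+y=2z$; hence $T$ inherits $3$-AP-freeness in $\mathbb{Z}_n$ and $eqdim(C_n)\le n-r(\lceil(n+1)/4\rceil)$.

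The hard part will be the lower bound in the case $n\equiv0\pmod4$. Everything hinges on recognizing that the extremal complement is governed by a single antipodal pair and on choosing the right reflection to count against; without this observation the constraints in $\mathbb{Z}_{2k}$ (which mix congruences modulo $k$ and modulo $2$) are awkward, and a naive fibre-counting argument does not immediately deliver the sharp constant $k+1$. The other point demanding care is the wrap-around control in the odd upper bound, where the interval length $(n+1)/2$ is exactly the largest for which cyclic and linear $3$-APs coincide, so the containment $T\subseteq\{1,\dots,(n+1)/2\}$ must be verified rather than assumed.
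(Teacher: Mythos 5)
Your proposal is correct and follows essentially the same route as the paper: the congruence criterion $2w\equiv x+y \pmod{n}$ for equidistance, the bipartite lower bound plus the parity argument for $n\equiv 2\pmod 4$, the antipodal-pair/reflection count for $n\equiv 0\pmod 4$ (with the same extremal complement of size $n/4+1$), and the reduction of the odd case to $3$-AP-free even-sum sets supported on half the cycle. The differences are only presentational --- you make explicit the no-antipodal-pair subcase and the wrap-around check that the paper leaves implicit, and you build the odd-case set directly rather than importing a minimum distance-equalizer set of $P_{(n+1)/2}$.
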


\begin{proof}
Throughout this proof, 	
 for every $i,j\in [n]$, we use the expression $\frac{i+j+n}{2}$ to represent the
only integer in $[n]$ modulo $n$ whenever $\frac{i+j+n}{2}$ is an integer.	Thus,
	for every pair of vertices $i,j$ of $C_n$, the vertices
equidistant from  them are $\frac{i+j}{2}$ and $\frac{i+j+n}{2}$,
whenever these values are integers. Hence, there is exactly one vertex equidistant from $i$ and $j$,
when $n$ is odd; there is no equidistant vertex from $i$ and $j$, whenever $n$ is even and $i$, $j$ h
ave distinct parity; and there are exactly two vertices equidistant from $i$ and $j$, if $n$ even and $i$, $j$ have
the same parity. Moreover, in this last case, the vertices equidistant from $i$ and $j$ are antipodal.

\begin{enumerate}[$i)$]
	\item Let $n$ be an even integer, and let us
	denote by $A$ (resp., $B$) the set of  vertices of $C_n$ labelled with odd (resp., even) numbers.
	As $n$ is even, $C_n$ is a bipartite graph and, by Proposition~\ref{propbipartitos}, for every distance-equalizer set $S$, either $A\subseteq S$  or $B\subseteq S$. Hence, $|S|\geq n/2$.
	We distinguish two subcases.

	\begin{enumerate}
		
	\item {Case $n\not= 0 \modq$}. We claim that $A$ is a distance-equalizer set (see an example in Figure~\ref{cycles}(a)).
	Indeed, for every $i,j\in [n]\setminus A$, the numbers $\frac{i+j}{2}$ and $\frac{i+j+n}{2}$ are
integers of different parity, because $n$ is even but $n\not= 4k$. Thus, either $\frac{i+j}{2}$ or $\frac{i+j+n}{2}$ belongs
 to $A$. Hence, $A$ is a distance-equalizer set and $eqdim{(C_n)}=n/2$.
	
	\item {Case $n=0 \modq$}. Let $S$ be a distance-equalizer set, and let us assume, relabeling the vertices
if necessary, that $A\subseteq S$. Thus, $[n]\setminus S\subseteq B$.
	First, we suppose that there is a pair of antipodal vertices in $[n]\setminus S\subseteq B$.
	We can assume without loss of generality that these vertices are $n/2$ and $n$.
 For every $i\in \{2,4,\dots ,n/2-2\}$, the only vertices equidistant from $i$ and $n-i$ are $n/2$ and $n$.
	Since $n/2$ and $n$ are not in $S$, we derive that
	at least one of the vertices $i$ or $n-i$ must be in $S$, for every $i\in \{2,4,\dots ,n/2-2\}$.
	Therefore, besides the vertices from $A$, the set $S$ contains at least $\frac{n/2-2}2$ vertices from $B$. Therefore,
	
	$$eqdim(C_n)\geq \frac{n}{2}+\frac{n/2-2}{2}=\frac{3n}{4}-1.$$

It is straightforward that the same bound holds if we suppose that there is no pair of antipodal vertices in $[n]\setminus S\subseteq B$.

	Now, we consider the set $S=[n]\setminus \{2,4,6,\dots ,n/2+2\}$ of size $|S|=\frac{3n}{4}-1$.
	It is easy to check that every pair of vertices not in $S$ has a vertex in $\{n/2+3,n/2+4,\dots ,n,1\}\subseteq S$ equidistant from them.
 Thus, we conclude that $S$ is a distance-equalizer set of minimum cardinality (see an example in Figure~\ref{cycles}(b)), and so
     $eqdim(C_n)= \frac{3n}{4}-1$.	
\end{enumerate}

\begin{figure}[!t]
	\centering
	\includegraphics[width=0.8\textwidth]{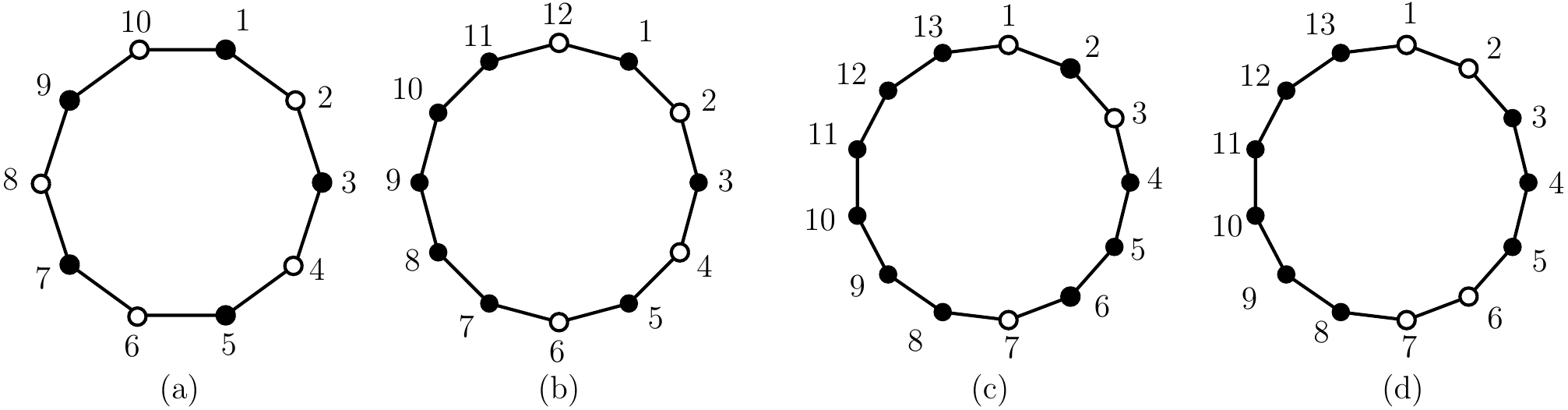}\hspace{2mm}
	\caption{The set of black vertices form a distance-equalizer set of the cycles (a) $C_{10}$, (b) $C_{12}$, and (c,d) $C_{13}$. In cases (a), (b) and (d), the given distance-equalizer set has minimum cardinality.}
	\label{cycles}
\end{figure}

	\item Let $n$ be an odd integer,  and 	
	let $S$ be a distance-equalizer set of minimum size.
	Since   $|S|\le n-1$, by Proposition~\ref{generalbounds}(i), we can assume without loss of generality that $n\notin S$.
 As $n$ is an odd integer, $n$ is the only vertex of $C_n$ equidistant from each pair of vertices $i,n-i$,
 with $i\in \{1,\dots ,(n-1)/2\}$, and so at least one of them must be in $S$.
	Therefore, $eqdim (C_n)\geq (n-1)/2$.
	
To prove the upper bound, let $S_1=\{ i : (n+1)/2 < i \le n\}$ and consider a
distance-equalizer set $S_2$ of $P_{(n+1)/2}$. We claim that $S=S_1 \cup S_2$ is a
distance-equalizer set of $C_n$  (see an example in Figure~\ref{cycles}(c)). Indeed, any two vertices $i,j$ not in $S$ belong to $[(n+1)/2] $, and there
is a vertex in $S_2$ equidistant from then, since $S_2$ is a distance-equalizer set of $P_{(n+1)/2}$. Hence,

	$$eqdim(C_n)\leq |S_1|+|S_2|=\frac{n-1}{2}+eqdim(P_{\frac{n+1}{2}})
	=n-r\left ( \Big \lceil\frac{n+1}{4} \Big \rceil  \right).$$
	
	\end{enumerate}
\end{proof}

Note that the distance-equalizer set constructed in the proof of the preceding theorem for odd cycles is not necessarily of minimum cardinality. In Figure~\ref{cycles}(c,d), the distance-equalizer set described in the proof of the theorem and a distance-equalizer set of minimum cardinality for $C_{13}$ are shown.

In Table~\ref{tab:eqdim_caminosciclos}
the values of the equidistant dimension of $C_n$ for $n \leq 20$ and $n=50$ are given.
Note that some of these values have been obtained with computer.

\begin{table}[!t]
	\center{
		\begin{tabular}{p{1.5cm}p{.27cm}p{.27cm}p{.27cm}p{.27cm}p{.27cm}p{.27cm}p{.27cm}p{.27cm}p{.27cm}p{.27cm}p{.27cm}p{.33cm}p{.33cm}p{.33cm}p{.33cm}p{.33cm}p{.33cm}p{.35cm}p{.33cm}}
			&&&&&&&&&&&&&&&&&&& \\
			\ \  \  \ \  \      n  & 3 & 4 & 5 & 6 & 7 & 8 & 9 & 10 & 11 & 12 & 13 & 14 & 15 & 16 & 17 & 18 & 19 & 20 & 50 \\
			\hline
			$r(\lceil n/2 \rceil)$       & 2&2&2&2&3&3&4&4&4&4&4&4&4&4&5&5&5&5&10\\		
			\hline
			$eqdim(P_n)$  & 1 & 2 & 3 & 4 & 4 & 5 & 5 & 6  & 7  & 8  & 9  & 10 & 11 & 12 & 12 & 13 & 14 & 15 & 40 \\
			\hline
			$eqdim(C_n)$  & 1 & 2 & 3 & 3 & 4 & 5 & 5 & 5  & 7  & 8  & 9  & 7  & 11 & 11 & 12 & 9  & 13 & 14 & 25 \\
	\end{tabular}}
	\caption{Equidistant dimension of some paths and cycles.}
	\label{tab:eqdim_caminosciclos}
\end{table}

\subsection{Johnson graphs}\label{johnson}

Johnson graphs are important because of their connections
with other combinatorial structures such as projective planes
and symmetric designs~\cite{b}. Furthermore, there exist different
studies about geometric versions of these graphs because of their multiple
 applications in network design (see for instance~\cite{Johnson_diam}).
 Due to these facts, among others, properties of Johnson graphs have been widely studied
  in the literature: spectra~\cite{Johnson_spectra}, induced subgraphs~\cite{Johnson_induced},
   connectivity~\cite{Johnson_connected}, colorings~\cite{Johnson_color}, distances~\cite{Johnson_distance},
   automorphisms~\cite{Johnson_aut} and
metric dimension~\cite{b}.
In this subsection we study the equidistant dimension of Johnson graphs, obtaining an upper bound for several cases.

The {\em Johnson graph} $J(n,k)$, with $n>k\ge 1$, has as vertex set the $k$-subsets of a $n$-set and two vertices are adjacent if their intersection
has size $k-1$. Thus, it can be easily seen that the distance between any two vertices $X,Y$ is given by
$$d(X,Y) = |X\setminus Y| = |Y\setminus X|=k-|X\cap Y|.$$
Consequently, a vertex $U\in V(J(n,k))$ is equidistant from vertices $X$ and $Y$ if and only if $|U\cap X| = |U\cap Y|.$

\begin{proposition} For any positive integer $k$, it holds that
$$eqdim(J(n,k))\leq n$$ whenever $n\in \{2k-1,2k+1\}$ or $n > 2k^2$.
\end{proposition}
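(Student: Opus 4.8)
The plan is to exhibit, in all three regimes, the single \emph{cyclic interval} family as a distance-equalizer set of size at most $n$, and to verify it by two different arguments. Identify the ground set with $\mathbb{Z}_n$ and, for each $i\in\mathbb{Z}_n$, let $W_i=\{i,i+1,\dots,i+k-1\}$ (indices mod $n$) be the $k$-set of $k$ consecutive elements starting at $i$. Set $S=\{W_i : i\in\mathbb{Z}_n\}$, so that $S\subseteq V(J(n,k))$ and $|S|\le n$. By the equidistance criterion recalled just above the statement, it suffices to prove that for every pair of distinct $k$-sets $X,Y$ there is an index $i$ with $|W_i\cap X|=|W_i\cap Y|$ (such a $W_i$ automatically differs from $X,Y$ when these lie outside $S$). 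Encode the pair by $g\colon\mathbb{Z}_n\to\{-1,0,1\}$, $g(m)=\mathbf{1}_{m\in X}-\mathbf{1}_{m\in Y}$, and put $\phi(i)=\sum_{j=0}^{k-1}g(i+j)=|W_i\cap X|-|W_i\cap Y|$. Note that $\mathrm{supp}(g)=X\triangle Y$ has even size $s=2m$ with $1\le m\le k$, and that $\sum_{i\in\mathbb{Z}_n}\phi(i)=k\sum_m g(m)=0$. The goal becomes: $\phi$ has a zero.

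For $n>2k^2$ I would argue by a gap count on the cycle. Any window $W_i$ avoiding $\mathrm{supp}(g)=X\triangle Y$ satisfies $\phi(i)=0$, since then $W_i\cap X=W_i\cap Y=W_i\cap(X\cap Y)$; so it is enough to find a run of $k$ consecutive non-support positions. The $s\le 2k$ support elements form $b\le s$ maximal blocks on the cycle, separated by $b$ gaps of non-support positions whose lengths sum to $n-s\ge 1$. If every gap had length at most $k-1$, then $n-s\le b(k-1)\le s(k-1)$, so $n\le sk\le 2k\cdot k=2k^2$, contradicting $n>2k^2$. Hence some gap has length at least $k$, it contains a full window $W_i$ disjoint from the support, and $\phi(i)=0$.

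The cases $n=2k\pm1$ are the main obstacle, because there the support can fill almost the whole cycle ($s$ may be as large as $2k=n\mp1$), so the gap argument breaks down. Here I would pass to the periodic antiderivative: since $\sum_m g(m)=0$, the partial sums extend to $G\colon\mathbb{Z}\to\mathbb{Z}$ with $G(m+1)-G(m)=g(m)$ and period $n$, and then $\phi(i)=G(i+k)-G(i)$. Adding the values at $i$ and $i+k$ and using $2k=n\mp1$ (so that $G(i+2k)=G(i\mp1)$) gives the key identity
$$\phi(i)+\phi(i+k)=G(i+2k)-G(i)=G(i\mp1)-G(i)\in\{-1,0,1\}.$$
Suppose, for contradiction, that $\phi$ never vanishes. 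Then $\phi(i)$ and $\phi(i+k)$ are nonzero integers summing to an element of $\{-1,0,1\}$, so they have opposite signs; hence $\mathrm{sign}\,\phi(i+2k)=\mathrm{sign}\,\phi(i)$ for every $i$. As $i+2k\equiv i\mp1\pmod n$, the sign of $\phi$ is constant on $\mathbb{Z}_n$, which is incompatible with $\sum_i\phi(i)=0$ and $\phi\not\equiv0$. Therefore $\phi$ has a zero.

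In all three cases $S$ equalizes every pair of vertices, so it is a distance-equalizer set with $|S|\le n$, giving $eqdim(J(n,k))\le n$. The delicate point is the telescoping identity for $n=2k\pm1$: it is exactly the near-antipodal relation $2k\equiv\pm1\pmod n$ that collapses $\phi(i)+\phi(i+k)$ into a single increment of $G$, and this is what replaces the failed pigeonhole when the symmetric difference $X\triangle Y$ is large.
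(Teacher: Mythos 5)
Your proof is correct and follows essentially the same route as the paper's: the same cyclic family of interval-windows, the same pigeonhole count on gaps for $n>2k^2$, and the same telescoping identity $\phi(i)+\phi(i+k)\in\{-1,0,1\}$ forcing a sign alternation for $n=2k\pm1$. The only (harmless) variations are that you count gaps in the complement of $X\triangle Y$ rather than of $X\cup Y$, and you close the odd case via constant sign versus $\sum_i\phi(i)=0$ instead of the paper's parity-of-$r$ argument with $s_{i+nk}=s_i$.
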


\begin{proof}
Consider the vertices of $J(n,k)$ as $k$-subsets of the $n$-set $W=\{0,\ldots,n-1\}$.
For each positive integer $i$, let $S_i = \{i, i+1, \ldots, i+k-1\}\in V(J(n,k))$ where sums are taken modulo $n$ (thus $S_{i+rn}=S_i$ for any integer $r$).
We claim that the set ${\cal S} = \{S_0,\ldots,S_{n-1}\}$ is a distance-equalizer set of $J(n,k)$.
Suppose on the contrary the existence of two vertices $X,Y\in V(J(n,k))\setminus {\cal S}$ such that $|S_i\cap X| \neq |S_i\cap Y|$ for every  $i\in  \{0,1,\dots ,n-1\}$.

First, we assume that $n > 2k^2$.
For $j,r$ integers, let $T(j,r)=\{j,j+1,\ldots,j+r\}\subseteq  W$, where the sums are also taken module $n$.
Let $\mathcal{T}=\{T_1,\dots ,T_s\}$ be the family of sets $T(j,r)$ satisfying $T(j,r) \cap (X\cup Y)=\emptyset$, $j-1\in X\cup Y$ and $j+r+1\in X\cup Y$. Note that $\mathcal{T}$ is a partition of $W\setminus (X\cup Y)$ with at most $2k$ parts, by construction. Moreover,
$|T_i|\leq k-1$ for every $i\in \{1,\dots ,s\}$, otherwise, if $T_i=T(j,r)$ then $|S_j\cap X|=|S_j\cap Y|=0$, which contradicts our hypothesis.
Therefore,
$$n=|W|=|X\cup Y| + |W\setminus {(X\cup Y)}|\leq
2k+2k(k-1)=
2k^2,$$
contradicting our assumption
on $n$.

Now, suppose $n\in\{2k-1,2k+1\}$.  Let $u = (u_0,\ldots,u_{n-1})$ be the vector of
$\{-1,1,0\}^n$ such that $u_i=1$ if $i\in X\setminus Y$; $u_i=-1$
if $i\in Y\setminus X$; and $u_i=0$ otherwise.
Observe that $u$ has at most $2k$ non-zero components, and the same number of $1$'s and $-1$'s.  Hence, $\sum_{i=0}^{n-1}u_i=0$.
Let $s_i = \sum_{j=i}^{i+k-1}u_j$.
Observe that
$s_i = |S_i\cap X|-|S_i\cap Y|$,  for every $i\in \{0,1,\dots ,n-1\}$.
Hence,  $s_i\not= 0$, for every $i\in \{0,1,\dots ,n-1\}$,
because no set $S_i$ is equidistant from $X$ and $Y$.
Next, we prove that $s_i s_{i+k}<0$ for every $i\in \{0,\dots ,n-1\}$.
Indeed, we have that
\begin{align*}
s_i + s_{i+k} + u_{i+2k}=\sum_{i=0}^{n-1}u_i=0, \textrm{ when }n=2k+1;\\
s_i + s_{i+k} - u_{i}=\sum_{i=0}^{n-1}u_i=0, \textrm{ when }n=2k-1.
\end{align*}
Therefore, for $n=2k+1$,
\begin{align*}
s_{i+k} &= -s_i-u_{i+2k}\leq -1 - u_{i+2k}\le 0, \textrm{ if } s_i>0;\\
s_{i+k} &= -s_i-u_{i+2k}\ge 1-u_{i+2k} \ge 0,\textrm{ if } s_i<0
\end{align*}
and for $n=2k-1$,
\begin{align*}
s_{i+k} &= -s_i+u_{i}\leq -1 + u_{i}\le 0, \textrm{ if } s_i>0;\\
s_{i+k} &= -s_i+u_{i}\ge 1 + u_{i} \ge 0,\textrm{ if } s_i<0.
\end{align*}

Hence, for every $i\in \{0,\dots, n-1\}$, we have $s_is_{i+k}<0$ since $s_{i+k}\not= 0$, and it can be derived that  $s_is_{i+rk}<0$, for $r$ odd, and  $s_is_{i+rk}>0$, for $r$ even.
Then, $s_i s_{i+nk}<0$, since $n$ is odd, which is a contradiction since $s_{i+nk}=s_i$.
\end{proof}


\section{Using distance-equalizer sets for constructing doubly resolving sets}\label{doubly}

We now explore different relationships among distance-equalizer sets and doubly resolving sets. To do this, we first need to formally
define resolving sets.
 Indeed, a  subset $S$ of vertices  is a resolving set of a graph $G$ if,
 for every pair of vertices $x,y\in V(G)$, there exists a vertex $v\in S$ such that $d(v,x)\neq d(v,y)$; the metric dimension of $G$, denoted
  by $dim(G)$, is the minimum cardinality of a resolving set of $G$.
 Observe that, on the one hand, a set of vertices can be at the same time resolving set and distance-equalizer set. For example, it
  is easy to check that any independent set $S$ of cardinality  three of a cycle of order 6 is both resolving and distance-equalizer.
However, in this case $S$ is a distance-equalizer set of minimum cardinality,  but $S$ is not a resolving set of minimum cardinality, since
$eqdim(C_6)=3$ and $dim(C_6)=2$.
On the other hand, there are graphs satisfying $dim(G) = eqdim(G)$ with no minimum  resolving set being a
distance-equalizer set. For example, the cycle or order 4
satisfies $eqdim(C_4)=dim(C_4)=2$, but there is no set of cardinality two that is both resolving and distance-equalizer because
 a resolving set on two vertices is formed by two adjacent vertices and distance-equalizer sets of cardinality two are formed
  by two non-adjacent vertices.
\medskip

Doubly resolving sets were introduced in \cite{our} as a tool for computing the metric dimension of cartesian
products of graphs. Furthermore, different authors have provided
 interesting applications of doubly resolving sets on source location \cite{cw,j}, algorithmic studies and relations with other graph parameters \cite{kratica0,ghm,doubly, kratica2}.
 We say that two vertices $u, v$ \emph{doubly
 	resolve} a pair of vertices $x,y$ of $G$ (or that that $\{x, y\}$ are {\em doubly resolved} by $u, v$)
 if $d(u, x) - d(u, y) \neq d(v, x) - d(v, y)$. A set $S\subseteq V(G)$ is a \emph{doubly resolving set} of $G$ if every pair $\{x, y\} \subseteq V(G)$ is doubly resolved by two vertices of $S$ (it is said that $S$ {\it doubly resolves} $\{x, y\}$), and the minimum cardinality of such a set is denoted by $\psi(G)$. Observe that a doubly resolving set is also a resolving set, and so
 $dim(G) \leq\psi(G$).

\begin{proposition}
For every graph $G$, it holds that
$$\psi (G)\leq dim(G) +2\, \, eqdim (G).$$
\end{proposition}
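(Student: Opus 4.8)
The plan is to exhibit a doubly resolving set of the required size by starting from the union of a minimum resolving set $R$ and a minimum distance-equalizer set $S$, and then repairing the few pairs this union fails to doubly resolve. Throughout I will use the reformulation that a set $T$ is doubly resolving exactly when, for every pair of distinct vertices $x,y$, the values $d(w,x)-d(w,y)$ with $w\in T$ are not all equal. Two elementary facts drive the argument. First, if both $x,y\in T$, then taking $w=x$ and $w=y$ yields the distinct values $-d(x,y)$ and $d(x,y)$, so every pair contained in $T$ is automatically doubly resolved. Second, if $x,y\notin S$, then $S$ contains a vertex equidistant from $x$ and $y$ (difference $0$), while $R$ contains a vertex with $d(w,x)\neq d(w,y)$ (a nonzero difference); these two witnesses doubly resolve $\{x,y\}$. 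Hence with $T_0=R\cup S$ the only pairs possibly left unresolved are those with exactly one endpoint outside $T_0$.

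Next I would pin down these surviving pairs. Suppose $\{x,y\}$ is not doubly resolved by $T_0$, so $d(w,x)-d(w,y)=c$ for all $w\in T_0$ and some constant $c$; since $R$ resolves, $c\neq 0$, and after possibly swapping $x,y$ we may take $c>0$. Testing $w=x$ shows $x\notin T_0$ (otherwise $c=-d(x,y)<0$), and the equalizer property then forces $y\in S$, with $w=y$ giving $c=d(x,y)$. Thus every surviving pair has the form $\{x,y\}$ with $y\in S$, $x\notin R\cup S$, and $d(w,x)=d(w,y)+d(x,y)$ for all $w\in R\cup S$; equivalently, $y$ lies on a shortest path from every vertex of $T_0$ to $x$.

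The key structural step is to bound, for each fixed $y\in S$, the number of such partners $x$. Consider the set $F_y$ of all $v$ with $d(w,v)=d(w,y)+d(v,y)$ for every $w\in R$. Because $R$ resolves, $F_y$ contains at most one vertex at each distance from $y$ (two vertices at equal distance would have identical distance vectors to $R$), and a short geodesic argument shows $F_y$ is in fact a path emanating from $y$. Moreover, for a surviving partner $x=f_m$ the defining equality holds for all $w\in R\cup S$, and squeezing along the geodesic gives $d(w,f_i)=d(w,y)+i$ for every $w\in S$; comparing two indices shows no vertex of $S$ can be equidistant from two distinct path vertices, so the equalizer property forces all but one of them into $S$. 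Consequently at most one vertex $x_y$ can form a surviving pair with $y$. I would then take $T=R\cup S\cup\{x_y : y\in S\}$, which adds at most $|S|=eqdim(G)$ vertices. Each previously surviving pair $\{x_y,y\}$ now has both endpoints in $T$ and is doubly resolved by the first elementary fact, while adding vertices cannot spoil any pair already resolved. Therefore $T$ is doubly resolving and $\psi(G)\le |R|+2|S|=dim(G)+2\,eqdim(G)$.

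The main obstacle is precisely this structural step: showing that the ``portal'' region $F_y$ is thin enough (one bad partner per element of $S$) so that the repair costs only $eqdim(G)$ extra vertices rather than something larger. This is where both hypotheses are genuinely used, the resolving property to make $F_y$ a single pendant path and the equalizer property to push all but its tip into $S$. The leaf--support configurations (where $x$ is a leaf with support $y$) are the extremal instances: such a pair admits \emph{no} equidistant vertex at all, so the only way to doubly resolve it is to include $x$ itself, which is exactly what the repair step does.
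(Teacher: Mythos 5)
Your proposal is correct and follows the same overall strategy as the paper: start from the union of a resolving set $R$ and a distance-equalizer set $S$, observe that the only pairs this union can fail to doubly resolve have exactly one endpoint in $S$ and the other outside $R\cup S$ lying ``behind'' it geodesically, show each vertex of $S$ has at most one such bad partner, and repair by adding those at most $|S|$ partners. The only place you diverge is in proving the uniqueness of the bad partner. The paper does this with a two-line reduction: if $y'$ and $y''$ were two bad partners of the same $x\in S$, then $d(u,y')-d(u,y'')=d(x,y')-d(x,y'')$ would be constant over all $u\in R\cup S$, so the pair $\{y',y''\}$ (both outside $S$) would itself be undoubly-resolved, contradicting the already-handled case. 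You instead build the geodesic ``portal path'' $F_y$, show it has one vertex per distance level, and push all but one of its vertices into $S$ via the equalizer property. Your argument is valid (the interval-closure and squeezing steps you sketch do go through), but it is heavier than needed: for two distinct partners $x,x'$ of $y$ one has $d(w,x)-d(w,x')=d(x,y)-d(x',y)\neq 0$ for every $w\in R\cup S$ directly, which already forbids an equidistant vertex in $S$ without any path structure. So the path machinery buys you extra geometric insight (the bad partners sit on a pendant geodesic almost entirely absorbed by $S$, with the leaf--support pairs as the extremal case, as you note) at the cost of a longer proof of a lemma the paper dispatches by self-reference to the case already settled.
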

\begin{proof}
 Let $A$ be a resolving set, and let $B$ be a distance-equalizer set.
We next construct a set $C$ of vertices satisfying $0\le |C|\leq eqdim(G)$ and such that for every pair
of vertices $x,y\in V(G)$, there exist $u,v\in A\cup B\cup C$ doubly resolving $x$ and $y$.

First, notice that if $x, y \in B$ then $x$ and $y$ are doubly resolved by themselves,
 and if $x, y  \notin B$, then $x$ and $y$ are doubly resolved by vertices $u$ and
$v$, where $u\in A$ is a vertex resolving $x$ and $y$, and $v\in B$ is equidistant from $x$ and $y$. Hence, in both cases,
$x$ and $y$ are doubly resolved by a pair of vertices  in $A\cup B$.

Now, suppose that $x\in B$ and $y\notin B$.
We claim that, for every $x\in B$ there is at most one vertex $y_x\notin B$ such that $x,y_x$ are not doubly resolved by $A\cup B$ and besides, in such a case, $d(u,x)+d(x,y_x)=d(u,y_x)$ for every $u\in A\cup B$.
Indeed, suppose that there exists $y'\notin B$ such that the pair $x, y'$ is not doubly resolved by $A\cup B$. Then, for all $u\in A\cup B$, the pair of vertices $x,u\in A\cup B$ does not doubly resolve $x,y'$. Hence, $d(u,x)-d(u,y')=-d(x,y')$.
In a similar way, if $y''$  is a vertex such that $y''\notin B$, $y''\not= y'$ and the pair $x, y''$ is not doubly resolved by $A\cup B$, then for all $u\in A\cup B$ we have $d(u,x)-d(u,y'')=-d(x,y'')$.
Therefore, $d(x,y')-d(x,y'')=d(u,y')-d(u,y'')$. Thus, for every pair of vertices $u,v\in A\cup B$ we obtain
$d(u,y')-d(u,y'')=d(x,y')-d(x,y'')=d(v,y')-d(v,y'')$, implying that $y',y''\notin B$ are not doubly resolved by $A\cup B$, which is not possible as we have seen in the former paragraph.
Consider the (possibly empty) set

$$C=\{ y_x  : x\in B,\,\,  y_x \notin B \text{ and } x, y_x\text{ are not doubly resolved by }A\cup B \} .$$

Then, $0\le |C|\le |B|$ and, by construction, $A\cup B\cup C$ doubly resolves $x$ and $y$ whenever  $x\in B$ and $y\notin B$.
Therefore, the set  $S=A\cup B \cup C$ is a doubly resolving set for $G$ and, consequently, $|S|\leq dim(G) +2\,\, eqdim (G)$.
\end{proof}

We think that the preceding bound can be improved as follows.

\vspace{0.3cm}
\noindent
\begin{conjecture}\label{conjpsi}
For every graph $G$, it holds that $\psi(G)\leq dim(G)+ eqdim(G)$.
\end{conjecture}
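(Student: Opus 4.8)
The plan is to refine the construction in the preceding proposition, where a minimum resolving set $A$ (so $|A|=dim(G)$), a minimum distance-equalizer set $B$ (so $|B|=eqdim(G)$), and an auxiliary set $C=\{y_x\}$ were combined to obtain $\psi(G)\le dim(G)+2\,eqdim(G)$. I would first reformulate double resolution in vector form: a set $S$ is doubly resolving if and only if for every pair $\{p,q\}$ the vector $(d(s,p)-d(s,q))_{s\in S}$ is non-constant. Under this lens, the argument in that proposition shows that $A\cup B$ already doubly resolves \emph{every} pair except the \emph{problematic} pairs $\{x,y_x\}$, where $x\in B$ and $y_x\notin B$ is the (at most one) vertex with $d(s,y_x)=d(s,x)+d(x,y_x)$ for every $s\in A\cup B$; that is, $y_x$ lies beyond $x$ from every vantage point in $A\cup B$. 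A short computation (plugging $s=x_1,x_2$ into the beyond-relation) even shows this partial map is injective both ways, so the problematic pairs form a partial matching between $B$ and $V(G)\setminus B$.

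To avoid paying for $C$ as a net addition, the idea is to \emph{swap} rather than append: for each problematic $x$, delete $x$ from $B$ and insert its partner $y_x$, forming $S=A\cup(B\setminus M)\cup M'$ with $M=\{x:\,x\text{ problematic}\}$ and $M'=\{y_x:\,x\in M\}$. Since at most $|M|$ vertices are added and exactly $|M|$ removed, $|S|\le |A|+|B|=dim(G)+eqdim(G)$, which is the desired bound. It then remains to check that $S$ is doubly resolving. The problematic pairs themselves are now fine: for $\{x,y_x\}$ the entry at $y_x\in S$ equals $d(x,y_x)>0$, while the entry at any $a\in A\subseteq S$ equals $-d(x,y_x)<0$, so the vector is non-constant. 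Any pair with both endpoints in $S$ is doubly resolved trivially, and any pair avoiding the deleted vertices whose witness was some swapped-out $x$ is still handled, because for $p,q\in A\cup B$ the beyond-relation yields $d(y_x,p)-d(y_x,q)=d(x,p)-d(x,q)$, so $y_x$ inherits the witnessing role of $x$.

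The hard part will be pairs $\{p,q\}$ with at least one endpoint outside $A\cup B$ whose only equalizer in $B$ is a swapped-out vertex $x\in M$. For such a pair the vector over $A$ is a nonzero constant $c$, every surviving vertex of $B\setminus M$ gives the value $c$, and the zero entry that certified double resolution disappeared together with $x$; there is no guarantee that $y_x$ (or any vertex of $M'$) equalizes $\{p,q\}$, since the beyond-relation only controls distances \emph{from} vertices of $A\cup B$, not from $p$ and $q$. This is precisely the obstruction that keeps the statement a conjecture. To push the plan through I would attempt to prove a structural lemma that a problematic vertex can never be the unique $B$-equalizer of a pair of external vertices; failing that, I would choose $B$ among all minimum distance-equalizer sets so as to minimize the number of problematic pairs (or to maximize $|A\cap B|$, which buys extra slack) and argue that an extremal such $B$ admits no obstructing pair. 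A full resolution most likely requires a global exchange argument coupling the resolving set and the distance-equalizer set, rather than the purely local swap sketched here.
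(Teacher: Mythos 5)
This statement is a \emph{conjecture} in the paper: the authors do not prove it. What they prove is the weaker general bound $\psi(G)\leq dim(G)+2\,eqdim(G)$ (by appending the auxiliary set $C$ to $A\cup B$) and, separately, the conjectured inequality for trees, via a completely different argument that exploits the fact that the set of leaves is the unique minimum doubly resolving set of a tree and counts intersections $|W'\cap T_v|\geq |L_v|$ over support vertices. So there is no proof in the paper to match your proposal against, and your proposal does not supply one either.

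Your analysis of the problematic pairs is correct as far as it goes: the uniqueness of $y_x$ for each $x\in B$, the injectivity in both directions (plugging $u=x_2$ and $u=x_1$ into the two beyond-relations forces $d(x_1,x_2)=0$), and the fact that after the swap the pair $\{x,y_x\}$ is doubly resolved by $y_x$ together with any $a\in A$. But you have accurately located the gap yourself, and it is fatal to the swap as stated: removing a problematic $x$ from $B$ can destroy the distance-equalizer property of the surviving set, because the beyond-relation $d(u,y_x)=d(u,x)+d(x,y_x)$ quantifies over $u\in A\cup B$ and says nothing about distances from an external pair $\{p,q\}$ to $y_x$. A pair whose only equalizer in $B$ was the deleted $x$ may then have a constant nonzero difference vector over all of $S$, and nothing in the construction rules this out. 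The proposed repairs (a structural lemma forbidding a problematic vertex from being a unique equalizer, or an extremal choice of $B$) are unsubstantiated; until one of them is actually proved, the argument establishes nothing beyond the $dim(G)+2\,eqdim(G)$ bound already in the paper. In short: the proposal is a reasonable research plan with an honestly flagged hole, not a proof, and the statement remains open.
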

\vspace{0.3cm}

A graph attaining the upper bound given in the preceding conjecture is shown in Figure \ref{conjecture}.

\begin{figure}[!t]
	\begin{center}
		\includegraphics[width=0.22\textwidth]{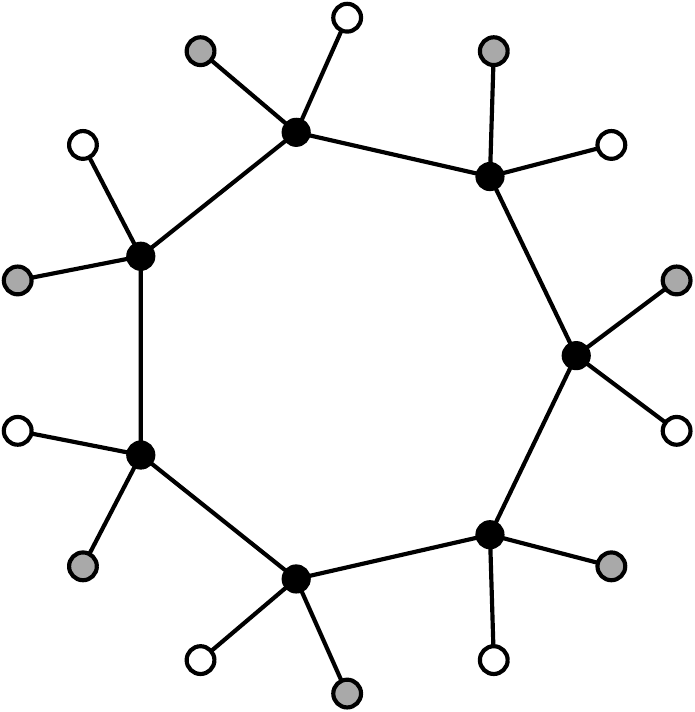}
		\caption{ In this graph $G$, $\psi(G)=dim(G)+eqdim(G)$. Black vertices, gray vertices and the set
of leaves are a distance-equalizer set, a resolving set and a doubly resolving set of minimum cardinality, respectively.}
		\label{conjecture}
	\end{center}
\end{figure}

Although we have no proof of Conjecture~\ref{conjpsi}, we next prove that it holds true for trees.

\begin{theorem}
	For every tree $T$, it holds that $$\psi(T)\leq dim(T)+ eqdim(T).$$
\end{theorem}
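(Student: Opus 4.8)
The plan is to bypass the correction-set construction used for the general bound and instead reduce the inequality to three facts about trees, combined by a short counting argument. Write $L(T)$ for the number of leaves of $T$ and, when $T$ is not a path, let $ex(T)$ be the number of exterior major vertices (vertices of degree at least $3$ that are the nearest major vertex of some leaf). The three ingredients I would use are: (a) the classical formula $dim(T)=L(T)-ex(T)$ for trees that are not paths; (b) the fact that the set of all leaves of $T$ is a doubly resolving set, giving $\psi(T)\le L(T)$; and (c) the inequality $eqdim(T)\ge ex(T)$. Granting these, for $T$ not a path one obtains $dim(T)+eqdim(T)\ge (L(T)-ex(T))+ex(T)=L(T)\ge \psi(T)$, which is exactly the claim. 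The case where $T$ is a path is immediate, since there $dim(T)=1$, $\psi(T)=2$, and $eqdim(T)\ge 1$.

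The heart of the argument is ingredient (c), and this is where I would concentrate. First I would invoke Lemma~\ref{hoja-soporte}, which gives $eqdim(T)\ge s(T)$, the number of support vertices of $T$. It then suffices to produce an injection from the exterior major vertices into the support vertices. For each exterior major vertex $v$ I would fix one of its terminal leaves $\ell_v$ and map $v$ to the support vertex $s_v$ of $\ell_v$, namely the neighbour of $\ell_v$ on the $v$--$\ell_v$ leg. Injectivity follows because the legs attached to distinct exterior major vertices are internally vertex-disjoint; a brief case analysis, according to whether the chosen legs have length one (so $s_v=v$) or more, shows that $s_v=s_{v'}$ forces $v=v'$. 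This yields $s(T)\ge ex(T)$ and hence $eqdim(T)\ge ex(T)$.

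For ingredient (b) I would verify directly that the leaf set doubly resolves every pair $\{x,y\}$ with $x\neq y$. Removing the first edge of the $x$--$y$ path on the $x$ side and taking a leaf $\ell_1$ in the component containing $x$, every such $\ell_1$ satisfies $d(\ell_1,x)-d(\ell_1,y)=-d(x,y)$; symmetrically a leaf $\ell_2$ on the $y$ side satisfies $d(\ell_2,x)-d(\ell_2,y)=+d(x,y)$. Since $d(x,y)>0$ these two values differ, so $\{\ell_1,\ell_2\}$ doubly resolves $\{x,y\}$, proving $\psi(T)\le L(T)$. Ingredient (a) is the standard metric-dimension formula for trees, which I would simply cite. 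I expect the main obstacle to lie in ingredient (c): making the injection and the application of Lemma~\ref{hoja-soporte} airtight through the degenerate configurations (legs of length one where the support vertex coincides with the major vertex, and the separate treatment of paths, which have no major vertex). A self-contained alternative, closer to the proof of the general bound $\psi(G)\le dim(G)+2\,eqdim(G)$, would be to show that in a tree the correction set $C$ can always be absorbed within the budget $dim(T)+eqdim(T)$; there the obstacle is controlling $|C|$, and I would still rely on the same leg-structure analysis, so I regard the counting route above as the cleaner one.
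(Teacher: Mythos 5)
Your proof is correct, but it takes a genuinely different route from the paper's. The paper argues locally at each support vertex $v$: writing $L_v$ for the set of leaves adjacent to $v$ and $T_v=\{v\}\cup L_v$, it uses the fact that any resolving set of a tree contains at least $|L_v|-1$ vertices of $L_v$, while Lemma~\ref{hoja-soporte} forces any distance-equalizer set to contain $v$ or all of $L_v$; hence the union $W'$ of a minimum resolving set and a minimum distance-equalizer set satisfies $|W'\cap T_v|\ge |L_v|$ for every support vertex $v$, and summing over the pairwise disjoint sets $T_v$ gives $dim(T)+eqdim(T)\ge |W'|\ge \sum_v |L_v|=\psi(T)$, since the leaf set is the minimum doubly resolving set of a tree. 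You instead combine three global facts: $\psi(T)\le L(T)$, the closed formula $dim(T)=L(T)-ex(T)$ for trees that are not paths, and $eqdim(T)\ge ex(T)$, the last deduced from $eqdim(T)\ge s(T)$ (Lemma~\ref{hoja-soporte}) plus an injection of exterior major vertices into support vertices. Each of your steps checks out: the leaf-pair argument for $\psi(T)\le L(T)$ is sound, the injection is indeed injective because legs of distinct exterior major vertices are internally disjoint (with your length-one case handled), and the separate treatment of paths is necessary since the dimension formula fails there (and, incidentally, since $eqdim(P_2)=1<2=s(P_2)$, so the support-vertex bound itself degenerates for $P_2$). What the paper's route buys is a lighter citation load and uniformity: it never needs the exact value of $dim(T)$, only the local fact that a resolving set misses at most one leaf per support vertex, and the same disjoint-union count delivers the bound in one pass. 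What your route buys is the intermediate inequality $eqdim(T)\ge ex(T)$, which has some independent interest and reduces the theorem to a one-line cancellation $(L-ex)+ex=L\ge\psi$; the price is reliance on the full formula for the metric dimension of trees as an external input.
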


\begin{proof}
    Let $S$ be the set of all support vertices of $T$. For every $v \in S$, consider the sets
	of vertices $L_v =\{z : \ z  \ \hbox{is a leaf adjacent to} \ v\}$ and
$ T_v =\{v\}\cup L_v$, and
	observe that the sets $T_v $ are pairwise disjoint. 	
	First, note that it is well-known that the set of leaves of a tree $T$ is the unique minimum doubly resolving set of $T$ (see \cite{our}). Hence, $$\psi(T)=\displaystyle\sum_{v\in S} |L_v|.$$
		Also, note that if $W$ is a resolving set of $T$, then $| W \cap L_v|\geq |L_v|-1$ for every  $v\in S$ (see \cite{ours2}).
	
	Let $W'$ be the union of a minimum  resolving set and a minimum  distance-equalizer set of $T$.
	We claim that $|W' \cap T_v|\geq |L_v|$, for every  $v\in S$.
	Indeed,  $|W'\cap L_v|\geq |L_v|-1$, since $W'$ is a resolving set, and  $v \in W'$ or $L_v\subseteq W'$,
	by Lemma~\ref{hoja-soporte}. In any case, $|W' \cap T_v|\geq |L_v|$.
	Then,
	
	$$\psi (T)= \sum_{v\in S} |L_v|\le \sum_{v\in S} |W' \cap T_v|\le |W'|\le dim(T)+eqdim (T).$$
\end{proof}

We finish this section analyzing lower and upper bounds on   $dim(G)+eqdim(G)$. Concretely, we are interested in
the minimum and maximum value of $dim(G)+eqdim(G)$ for graphs of order $n$.
First, note that for any nontrivial graph $G$ of order $n$,
\begin{equation}\label{cotadimeqdim}2\le dim(G)+eqdim(G)\le 2(n-1).\end{equation}
The lower bound in (\ref{cotadimeqdim}) is attained only by the paths $P_2$ and $P_3$, by Theorem~\ref{extremalinferior}(i),
 and the upper bound, only by the path $P_2$, by Theorem
 ~\ref{extremalsuperior}(i).
Hence, for every graph $G$ of order at least $4$,
\begin{equation*}\label{cotadimeqdim4}3\le dim(G)+eqdim(G)\le 2n-3.\end{equation*}
In order to study this question, we consider the following functions defined for integers $n\geq 4$:
\begin{align*}
\Sigma(n)&:=\displaystyle\max \{dim(G)+eqdim(G) \ : \ |V(G)|=n \}\\
\sigma(n)&:=\displaystyle\min \{dim(G)+eqdim(G) \ : \ |V(G)|=n \}.
\end{align*}

\begin{proposition} For every  integer $n\ge 4$, the following statements hold.
	\begin{enumerate}[i)]
		\item $\Sigma(n)\geq \frac{3n}{2}-3$;
		\item $\sigma(n)\leq \log_2(n)+2$.
	\end{enumerate}
\end{proposition}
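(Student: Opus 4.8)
The plan is to prove the two bounds separately, constructing an explicit family of graphs for each. For part (i), I want a family of order $n$ in which both $dim(G)$ and $eqdim(G)$ are large, while for part (ii) I want a single graph per order $n$ in which both are simultaneously small (logarithmic).

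For part (i), the natural candidate is a graph built so that the lower bounds from Lemma~\ref{hoja-soporte} force a large $eqdim$ while a collection of leaves forces a large $dim$. A concrete choice is a ``spider-like'' or ``broom'' construction: take roughly $n/2$ support vertices, each carrying a leaf, arranged along a path or star so the graph stays connected. By Lemma~\ref{hoja-soporte}, $eqdim(G)$ is at least the number of support vertices, giving about $n/2$; and a standard metric-dimension fact (each support vertex with $t$ leaves forces $t-1$ of them into any resolving set, as already cited from \cite{ours2} in the proof of the tree theorem above) pushes $dim(G)$ up by an additional linear amount. Tuning the number of leaves per support vertex so that the leaf-count contributes about $n/2 - 3$ to the sum while the support vertices contribute the remaining $n/2$ should yield $dim(G)+eqdim(G)\ge \frac{3n}{2}-3$. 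I would verify the exact constant on the specific construction and confirm connectivity; the arithmetic of distributing $n$ vertices among supports and leaves is the routine part.

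For part (ii), the goal is a graph of order $n$ with $dim(G)+eqdim(G)\le \log_2(n)+2$. The most natural source of logarithmic metric dimension is a hypercube-type or ``binary labelling'' construction: graphs on $n$ vertices admitting a resolving set of size $\lceil \log_2 n\rceil$ via binary distance-vectors. I would take such a graph (or design one) that simultaneously has $eqdim(G)=1$ or $2$ — for instance a graph with a universal vertex, so that $eqdim(G)=1$ by Theorem~\ref{extremalinferior}(i) — while retaining $dim(G)\approx \log_2 n$. If a universal vertex can be added without destroying the logarithmic resolving set (adjusting distances are all $1$ from it, which does not help resolve but also often does not hurt the existing separation), then $dim(G)+eqdim(G)\le \log_2(n)+1\le \log_2(n)+2$. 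The delicate point is ensuring the two requirements coexist on a single graph.

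The main obstacle I expect is part (ii): it is easy to make $eqdim$ small (a universal vertex) and easy to make $dim$ small (a binary-labelled graph), but arranging both at once on the same $n$-vertex graph requires care, since adding a universal vertex collapses all distances to $1$ or $2$ and may wreck the logarithmic resolving structure. I would therefore look for a graph of diameter $2$ whose vertices carry distinct subsets of a $\lceil\log_2 n\rceil$-element ``coordinate'' resolving set, checking that the vector of distances (values in $\{0,1,2\}$) still separates all pairs; such diameter-$2$ constructions are exactly where a universal vertex is harmless. Confirming that one fixed graph realizes $dim\le\log_2 n$ and $eqdim\le 2$ together is the crux, and the rest is bookkeeping.
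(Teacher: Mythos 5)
Your plan for part (ii) is essentially the paper's own construction: the paper takes a universal vertex $v$, a set $B=\{1,\dots,k\}$ of ``coordinate'' vertices, and $2^k$ vertices indexed by binary words of length $k$, each word being adjacent to the coordinates in which it carries a $1$; then $\{v\}$ is a distance-equalizer set, $\{v\}\cup B$ is a resolving set, and $dim(G_k)+eqdim(G_k)\le k+2\le\log_2(n)+2$ for $n=2^k+k+1$. The coexistence issue you flag is resolved exactly as you anticipate, by a diameter-$2$ subset-labelling in which the universal vertex is harmless. So part (ii) is sound in outline.

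Part (i), however, has a genuine gap: a broom/spider (or any tree-with-leaves) construction cannot certify the bound $\frac{3n}{2}-3$ by the tools you invoke. If the graph has support vertices $v_1,\dots,v_s$ carrying $t_1,\dots,t_s$ leaves, your two lower bounds give $eqdim(G)\ge s$ (Lemma~\ref{hoja-soporte}) and $dim(G)\ge\sum_{i}(t_i-1)$ (the fact from~\cite{ours2}); their sum is $s+\sum_i(t_i-1)=\sum_i t_i$, the total number of leaves, which is at most $n-s<n$. Since $\frac{3n}{2}-3>n-1$ for every $n\ge 5$, no tuning of $s$ and the $t_i$ can close the deficit; your arithmetic also slips at the key point, since $\frac{n}{2}+\bigl(\frac{n}{2}-3\bigr)=n-3$, not $\frac{3n}{2}-3$. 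What is needed is a graph in which the \emph{same} large set of vertices is simultaneously hard to resolve and hard to equalize, so that the two contributions overlap rather than partition $V(G)$. The paper achieves this with $G=K_{\lfloor n/2\rfloor,\lceil n/2\rceil}$: twin vertices force $dim(G)=n-2$, while the bipartite parity obstruction (Proposition~\ref{propbipartitos}) together with Theorem~\ref{bipartitoscompletos}(ii) gives $eqdim(G)=\lfloor n/2\rfloor$, whence $dim(G)+eqdim(G)=n-2+\lfloor n/2\rfloor\ge\frac{3n}{2}-3$.
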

\begin{proof}
	\begin{enumerate}[$i)$]
		\item It is enough to consider the complete bipartite graph $G=K_{\lfloor n/2\rfloor,\lceil n/2\rceil}$, for which
 $dim(G)=n-2$ (see~\cite{CEJO-DAM00}), and $eqdim(G)=\lfloor n/2\rfloor$, by Theorem~\ref{bipartitoscompletos}(ii).
Hence, $dim (G)+eqdim(G)= n-2 + \lfloor n/2\rfloor\geq 3n/2-3$.
		
		\item For every $k\ge 1$, consider the graph $G_k$, with  $V(G_k)=A\cup B\cup C$, where
		$ \ A=\{v\}, \ B=\{1, \dots, k\}$, $ \ \
		C=\{w : \ w \ \ \hbox{is a binary word of length k}  \}$ and two different vertices $x$ and $y$ are adjacent in $G_k$
 if and only if one of the following conditions hold (see an example in Figure~\ref{g3}):
	    \begin{enumerate}[$\bullet$]
	    \item one of the vertices is $v$;
	
	    \item one of the vertices belongs to $C$, say $x=w\in C$, and the other one belongs to $B$, say  $y=j\in B$, and
	    $w$ has the digit $1$ in the $j$-th position.
	    \end{enumerate}
	     Then, $G_k$ is a graph of order $n=2^k+k+1$. Moreover, $A$  is a
 distance-equalizer set since $v$ is a universal vertex, and it is easy to check that $A\cup B$ is a resolving set. Hence,  $dim (G_k)+eqdim(G_k)\le k+2\le  \log_2(n)+2$.
	\end{enumerate}
\end{proof}
\begin{figure}[!t]
	\begin{center}
		\includegraphics[width=0.5\textwidth]{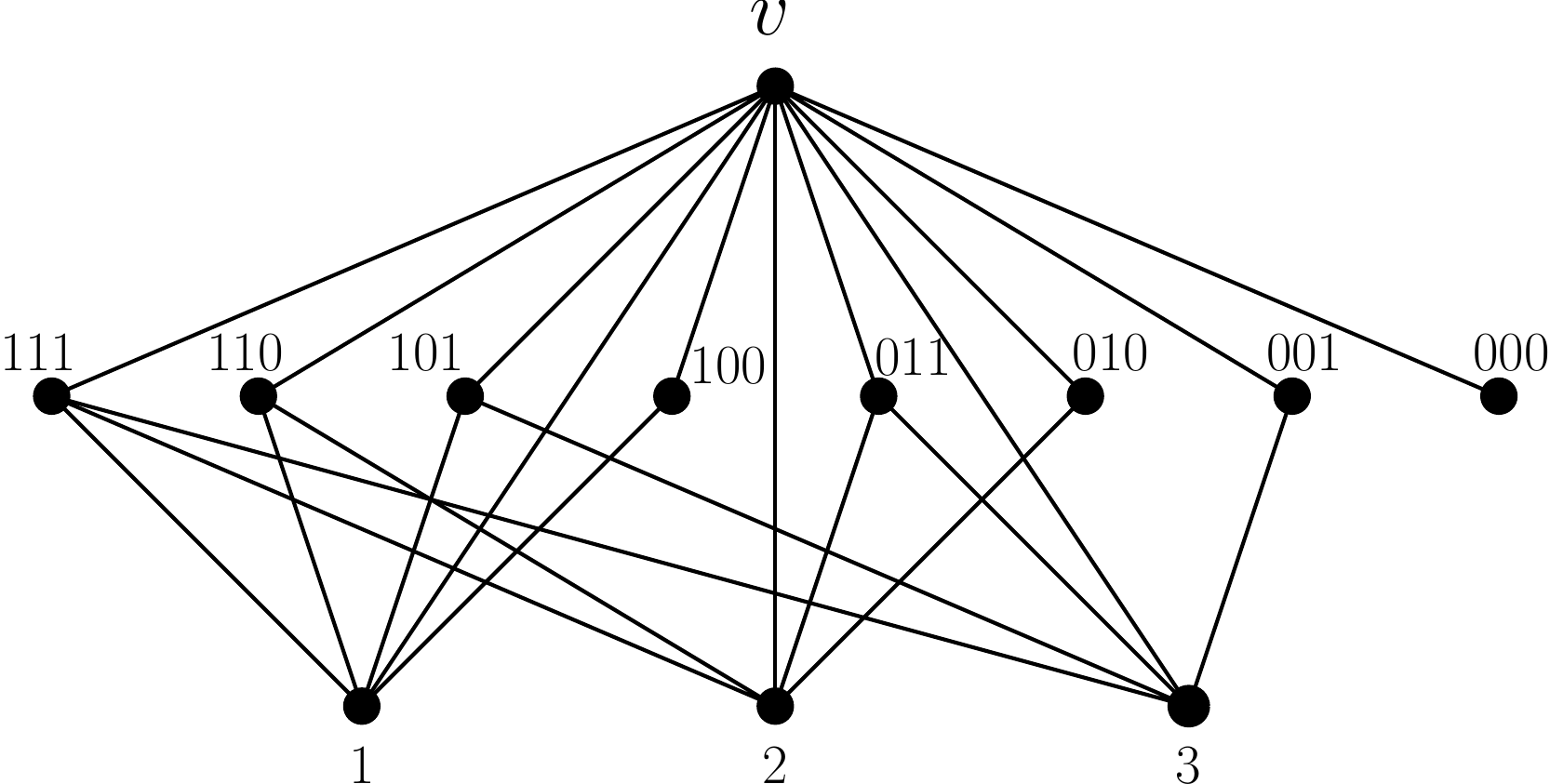}
		\caption{ The graph $G_3$.}
		\label{g3}
	\end{center}
\end{figure}

\section{Conclusions and open problems}\label{op}

In this paper, the notion of equidistant dimension as a parameter to evaluate sameness in graphs is introduced.
The value of this invariant in several families of graphs
and  relations with other parameters
 have been provided. In Table 2, the equidistant dimension, metric dimension and minimum cardinality of doubly resolving
 sets of some families of graphs are given.
Also, all graphs reaching some extremal values of the equidistant dimension have been characterized.

\begin{table}[!t]
\label{basicfamilies}
\center{
\begin{tabular}{p{1.5cm}p{4cm}p{4.5cm}p{1.3cm}p{1cm}}
  \hline
  $G$ & $constraints$ & $eqdim(G)$ & $dim(G)$ & $\psi(G)$ \\  \hline \hline
  $P_n$ & $n\geq 2$ & $n-r \left( \Big \lceil {{\frac{n}{2}}} \Big \rceil \right)$ & 1 & 2 \\  \hline
   & $n=4k\geq 4$ & $\frac{3n}{4}-1$ & 2 & 3 \\
  $C_n$ & $n=4k+2\geq 6$ & $\frac{n}{2}$ & 2 & 3 \\
  & $n=2k+1\geq 5$ & $\leq n-r\left (\Big \lceil\frac{n+1}{4} \Big \rceil  \right)$ & 2 & 2 \\  \hline
  $K_n$ & $n\geq 3$ & 1 & $n-1$ & $n-1$ \\  \hline
  $K_{r,s}$ & $2\leq r\leq s=n-r$ & r & $n-2$ & $n-2$ \\  \hline
  $K_{1,n-1}$ & $n\geq 4$ & 1 & $n-2$ & $n-1$ \\  \hline
  $K_2(r,s)$ & $3\leq r\leq s=n-r$ & r & $n-4$ & $n-2$ \\  \hline
  $K_{n_1,\dots,n_p}$ & $p\geq 3$, $n_1+\dots +n_p=n $ & min$\{3,n_1,\dots,n_p\}$ & $n-p$ & $n-p$  \\
      \hline
\end{tabular}
\caption{Equidistant dimension and related parameters of some families of graphs.}}
\end{table}

As future work, besides solving Conjecture~\ref{conjtrees} about the equidistant dimension of trees, and Conjecture~\ref{conjpsi}
about the relation of the equidistant dimension with doubly resolving sets, it would be interesting to relate distance-equalizer
sets to other types of sets of vertices such as dominating sets, cut sets or determining sets, for example. Also, it could be of
interest to find other graph families whose equidistant dimension connects with other problems, thus producing similar results as
the relationship between the computation of this parameter in paths and AP-3-free sequences. Finally, we could perform new
techniques that allow us to compute exact values for the equidistant dimension of Johnson graphs.

\end{document}